\documentclass[11pt]{article}
\usepackage{amsmath, amssymb, amsthm}
\usepackage{enumitem}                                 
\usepackage[margin=1.0in]{geometry}
\usepackage[T1]{fontenc}

\AtEndDocument{\bigskip{\footnotesize%
\textsc{Department of Mathematics, the Pennsylvania State University, University
Park, PA 16802, USA} \par 
  \textit{E-mail address} \texttt{ aka5983@psu.edu
} \par
\textit{E-mail address} \texttt{ zhirenw@psu.edu
}
}}




\newtheorem{theorem}{Theorem}[section]
\newtheorem{Remark} [theorem]{Remark}

\newtheorem{Counter-example}[theorem]{Counter example}
\newtheorem{Claim}[theorem]{Claim}

\newtheorem{Lemma}[theorem]{Lemma}

\newtheorem{Definition}[theorem]{Definition}
\newtheorem{Corollary}[theorem]{Corollary}

\newtheorem*{theorem*}{Theorem}






\newcommand{\ignore}[1]{}

\usepackage{xcolor}
\usepackage[pagebackref]{hyperref}
\hypersetup{
   colorlinks,
    linkcolor={red!60!black},
    citecolor={blue!60!black},
    urlcolor={blue!90!black}
}





\DeclareMathOperator*{\sign}{sign}

\allowdisplaybreaks[1]


\title{Arbitrarily slow decay in the M\"{o}bius disjointness conjecture}

\author{Amir Algom and Zhiren Wang}
\date{}

\begin{document}
\maketitle
\begin{abstract}
Sarnak's M\"{o}bius disjointness conjecture  asserts that for any zero entropy dynamical system $(X,T)$,  $\frac{1}{N} \sum_{n=1} ^N f(T^n x) \mu (n)= o(1)$ for every  $f\in \mathcal{C}(X)$  and  every $x\in X$. We construct examples showing that this $o(1)$ can go to zero arbitrarily slowly. In fact, our methods yield a more general result, where in lieu of $\mu(n)$ one can put any bounded sequence such that the {C}es\`aro mean of the corresponding sequence of absolute values does not tend to zero.
\end{abstract}
\section{Introduction}
 A topological dynamical system is a pair $(X,T)$ where  $X$ is  compact metric space and $T\in \mathcal{C}(X)$. If the system $(X,T)$ has zero topological entropy, then Sarnak's M\"{o}bius disjointness conjecture \cite[Main Conjecture]{Sarnak2012conjecture} predicts that 
\begin{equation} \label{Eq Sarnak}
\frac{1}{N}\sum_{n=1} ^N \mu(n)f(T^n x) = o(1),\quad \text{ for every } f\in \mathcal{C}(X) \text{ and  every } x\in X.
\end{equation}
Many special cases of Sarnak's Conjecture have been established: A very partial list of examples consists of  \cite{Bourgain2013Sarnak, Houcein2014Lem, Host2018Fran, Green2012Tao}. We refer to the surveys of Ferenczi, Ku\l aga-Przymus, and Lema\'{n}czyk \cite{Fere2018Lem} and of Ku\l aga-Przymus and Lema\'{n}czyk \cite{Lem2021Kol} for  excellent expositions on the subject, and many more  references.

The goal of this paper is to study the rate of decay in Sarnak's conjecture. That is, to study the nature of the $o(1)$ as in \eqref{Eq Sarnak}. We will show that there are systems for which this $o(1)$  decays to zero arbitrarily slowly. Nevertheless, all the examples we construct to this end satisfy Sarnak's conjecture. Here is our main result:
\begin{theorem} \label{Main Coro}
For every decreasing and strictly positive sequence  $\tau(n)\rightarrow 0$ there is a  dynamical system $(X,T)$ with zero topological entropy that satisfies:
\begin{enumerate}
\item There exist $x\in X$ and $f\in \mathcal{C}(X)$ such that:
$$\limsup_{N\rightarrow \infty} \frac{ \frac{1}{N} \sum_{n=1} ^N f(T^n x)\mu(n) }{\tau(n)} >0.$$

\item The  system $(X,T)$ satisfies Sarnak's conjecture \eqref{Eq Sarnak}.
\end{enumerate}
\end{theorem}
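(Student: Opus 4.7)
The plan is to construct $(X,T)$ as an explicit subshift of $\{-1,0,1\}^{\mathbb{Z}}$. I would choose a sparse set $S\subset\mathbb{N}$ of squarefree integers satisfying: (i) $|S\cap[1,N]|\geq cN\tau(N)$ for infinitely many $N$ and some $c>0$; (ii) a uniform window bound $K(L):=\sup_{t}|S\cap[t,t+L-1]|=O(L\tau(L))$; and (iii) arbitrarily large gaps in $S$. All three are compatible for any decreasing $\tau\to 0$ and can be produced by a greedy construction placing the $k$-th element of $S$ near $k/\tau(k)$ while avoiding clumping. Then set $a_n=\mu(n)\mathbf{1}_S(n)$ and let $X$ be the orbit closure of $a$ under the left shift $T$.

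For zero topological entropy I would count factors. Each length-$L$ factor of $a$ is determined by the shape $\mathbf{1}_S$ on the window and by the values of $\mu$ at the $S$-sites inside it, so $c_a(L)\leq c_{\mathbf{1}_S}(L)\cdot 2^{K(L)}\leq (K(L)+1)\binom{L}{K(L)}\cdot 2^{K(L)}$. Stirling combined with (ii) then gives $\log c_a(L)=O(L\tau(L)\log(1/\tau(L)))=o(L)$, since $\tau(L)\log(1/\tau(L))\to 0$ whenever $\tau(L)\to 0$.

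For part (1) I would take $x=a$ and $f(y)=y_0$: squarefreeness of $S$ reduces the Sarnak average to $\frac{1}{N}\sum_{n=1}^N f(T^n a)\mu(n)=|S\cap[1,N]|/N\geq c\tau(N)$ infinitely often, giving the required $\limsup>0$. For part (2), condition (iii) forces the zero sequence $\mathbf{0}:=0^{\mathbb{Z}}$ into $X$, while (ii) yields $|\supp(y)\cap[-L,L]|\leq K(2L+1)=o(L)$ for every $y\in X$, so every point of $X$ has support of upper density zero. By Stone--Weierstrass it suffices to treat cylinder functions; writing $f=f(\mathbf{0})+g$ with $g$ a cylinder vanishing at $\mathbf{0}$, the constant piece contributes $f(\mathbf{0})\cdot\frac{1}{N}\sum_{n\leq N}\mu(n)\to 0$ by the prime number theorem, whereas $g(T^n y)$ can be nonzero only for those $n$ whose fixed-radius window around $n$ meets $\supp(y)$, a set of density zero; hence $\frac{1}{N}\sum g(T^n y)\mu(n)\to 0$ too, which verifies Sarnak's conjecture for $(X,T)$.

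The hardest step is the combinatorial construction of $S$ satisfying (i)--(iii) simultaneously, where the lower density bound must be reconciled with the uniform upper bound that controls both entropy and Sarnak's conjecture. In the end nothing restricts the decay rate $\tau$, but when $\tau$ decreases very slowly the shape-counting and value-counting contributions to $c_a(L)$ nearly balance, and it is precisely the elementary fact that $\tau\log(1/\tau)\to 0$ as $\tau\to 0$ that makes the entropy estimate go through for all admissible $\tau$.
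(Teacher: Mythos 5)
Your proposal takes a genuinely different and substantially simpler route than the paper. The paper constructs a skew-product subshift of $\{-1,0,1\}^{\mathbb N}\times\{-1,0,1\}^{\mathbb Z}$ whose system and observable depend only on $\tau$ (which is what allows it to prove the stronger Theorem~\ref{Main Theorem} for arbitrary bounded sequences), and it establishes M\"obius disjointness via the Matom\"aki--Radziwi\l{}\l{} short-interval bound, exploiting approximate periodicity. You instead build a plain orbit-closure subshift of $\{-1,0,1\}^{\mathbb Z}$ tailored to $\mu$ by zeroing out $\mu$ off a sparse squarefree set $S$; this suffices for Theorem~\ref{Main Coro}, and your disjointness proof avoids Matom\"aki--Radziwi\l{}\l{} entirely: since every point of the orbit closure has support of Banach density zero, a cylinder $f$ splits as $f(\mathbf{0})$ (handled by the Prime Number Theorem) plus a piece supported on a density-zero set of times. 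This is cleaner for the specific case of $\mu$, at the price of not yielding Theorem~\ref{Main Theorem}: your space $X$ depends on $\mu$ through $S$, whereas the paper's space depends only on $\tau$, and your part~(1) uses the identity $\mu(n)^2\mathbf 1_S(n)=\mathbf 1_S(n)$, which has no analogue for a general sequence $a_n$.

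There is, however, a genuine gap in the stated construction of $S$. Condition~(ii) demands $K(L):=\sup_t|S\cap[t,t+L-1]|=O(L\tau(L))$, but this cannot hold together with~(i) when $\tau$ decays too fast: if $\tau(N)=N^{-2}$ then $L\tau(L)\to 0$, forcing $K(L)=0$ for all large $L$ and hence $S=\emptyset$, which violates~(i). So the assertion that (i)--(iii) are ``compatible for any decreasing $\tau\to 0$'' is false as stated. The repair is short but must be made explicit, and there are two natural ways to do it. One is to observe that all you actually use from~(ii) is the uniform $o(L)$ bound $K(L)=o(L)$: this is enough for the entropy estimate (since $\epsilon\log(1/\epsilon)\to 0$ as $\epsilon\to 0$, with $\epsilon=K(L)/L$) and for the density-zero support of every $y\in X$, and it is compatible with~(i) for every $\tau\to 0$. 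The other is to note that since your average $\frac1N\sum_{n\le N}f(T^na)\mu(n)=|S\cap[1,N]|/N$ is nonnegative, the conclusion $\limsup\frac{\cdot}{\tau(N)}>0$ is monotone in $\tau$: replacing $\tau(N)$ by $\max\bigl(\tau(N),N^{-1/2}\bigr)$ only weakens what must be proved, so one may assume $L\tau(L)\to\infty$ without loss of generality, after which (i)--(iii) are indeed simultaneously achievable by the greedy placement you sketch. Either fix closes the gap. A small side remark: you do not actually need condition~(iii) or $\mathbf{0}\in X$ for the decomposition $f=f(\mathbf{0})+g$, since a cylinder function on the subshift extends canonically to the full shift and $f(\mathbf{0})$ is well defined there.
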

Several remarks are in order. First, Sarnak \cite[the remark following Main Conjecture]{sarna2010kmobius} remarks that  rates are not required in the conjecture, and this is formally justified by  Theorem \ref{Main Coro}. Secondly, it is natural to ask if Theorem \ref{Main Coro} may be upgraded by finding a zero  entropy dynamical system $(X,T)$ and $f\in \mathcal{C}(X)$ such that for every rate function $\tau$ we can find $x\in X$ that satisfies part (1) of Theorem \ref{Main Coro}. Doing so is as hard as solving the full M\"{o}bius disjointness conjecture: Indeed, by \cite[Corollary 10]{Abdal2018Lem}, if the conjecture is true then for every zero entropy system $(X,T)$ and $f\in \mathcal{C}(X)$, \eqref{Eq Sarnak} holds uniformly in $x\in X$. This cannot hold concurrently with the aforementioned upgraded version of Theorem \ref{Main Coro}. In other words, Theorem \ref{Main Coro} is conjecturally optimal. Next, we remark that in many cases (possibly in all cases), it is known \cite{Tao2012Blog} that a sufficiently fast rate in Sarnak's conjecture implies that the system $(X,T)$ satisfies a prime number Theorem (PNT) in the sense discussed in \cite[Section 11.2]{Fere2018Lem}. Thus, recent examples  \cite{kanigowski2020prime, fkl2021} of zero entropy systems failing to satisfy a PNT can be viewed as evidence  towards Theorem \ref{Main Coro}. We also mention some recent interesting examples constructed by Lian and Shi \cite{Lian2021Shi} that, while not directly related to Theorem \ref{Main Coro}, are similar in spirit to our work. Finally, we remark that our construction was partially inspired by the recent work of Dolgopyat,   Dong,  Kanigowski,  and N{\'a}ndori  \cite{dolgopyat2020flexibility},  where they exhibit some new classes of zero entropy smooth systems that satisfy the Central Limit Theorem

We will derive Theorem \ref{Main Coro} from a more general statement. This is the following Theorem, which forms the main technical result of this paper:
\begin{theorem} \label{Main Theorem}
For every decreasing and strictly positive sequence  $\tau(n)\rightarrow 0$ there is a zero entropy  dynamical system $(X,T)$ and some $f\in \mathcal{C}(X)$ that satisfy:
\begin{enumerate}
\item Every sequence $|a_n|\leq 1$ with $\limsup_{N\rightarrow \infty} \frac{1}{N}\sum_{n=1} ^N |a_n| >0$ admits some  $x\in X$ such that
$$\limsup_{N \rightarrow \infty} \frac{ \frac{1}{N} \sum_{n=1} ^N f(T^n x)a_n }{\tau(n)} >0.$$

\item The  system $(X,T)$ satisfies Sarnak's conjecture \eqref{Eq Sarnak}.
\end{enumerate}

\end{theorem}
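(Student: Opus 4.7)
My plan is to build $(X, T)$ as the orbit closure, under the shift, of a carefully designed reference sequence $\omega \in \{-1, 0, 1\}^{\mathbb{Z}}$, and take $f(x) = x_0$. The two guiding features of $\omega$ will be (i) \emph{uniform sparsity at rate $\tau$}: every length-$L$ window of $\omega$ contains at most $CL\tau(L)$ nonzero entries, and (ii) \emph{universality}: for a rapidly increasing sequence $(N_k)$ and $s_k := \lfloor N_k\tau(N_k)\rfloor$, the set $\mathcal{P}_k = \{w \in \{-1,0,1\}^{N_k} : |\supp w| = s_k\}$ is entirely realized as subwords of $\omega$. To construct such $\omega$ I would concatenate, in order, all words from $\mathcal{P}_1, \mathcal{P}_2, \ldots$, interspersed with long zero-paddings calibrated to $(N_k)$ and $\tau$ so that (i) holds globally. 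This is feasible because $\tau(N_k)\to 0$ forces $|\mathcal{P}_k| = \binom{N_k}{s_k}2^{s_k}$ to grow only sub-exponentially in $N_k$.

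Zero topological entropy of $X = \overline{\{T^n\omega\}}$ then follows from (i), which forces the subword complexity of $\omega$ to satisfy $p_\omega(L) \leq \binom{L}{\lfloor CL\tau(L)\rfloor}\, 2^{CL\tau(L)} = 2^{o(L)}$. Uniform sparsity passes to every $x \in X$, so for $f(x) = x_0$ one gets immediately
$$\left|\frac{1}{N}\sum_{n=1}^N f(T^n x)\,\mu(n)\right| \;\leq\; \frac{|\{n \leq N : x_n \neq 0\}|}{N} \;\leq\; C\tau(N) \longrightarrow 0.$$
For general $g \in \mathcal{C}(X)$, I would approximate $g$ uniformly by cylinder functions $g_K(x_{-K},\ldots,x_K)$; the constant term $g_K(0,\ldots,0)$ contributes $o(1)$ by the prime number theorem, while the remainder is supported on $n$'s with some $x_{n+i}\neq 0$, a set of density $O(K\tau(N)) = o(1)$. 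This establishes part (2) of the theorem.

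For part (1), given $(a_n)$ with $|a_n|\leq 1$ and $\limsup_N \frac{1}{N}\sum_{n \leq N}|a_n| \geq \alpha > 0$, a standard truncation produces $\eta, \delta > 0$ and $M_j \to \infty$ with $|G \cap [1, M_j]| \geq \delta M_j$, where $G = \{n: |a_n| \geq \eta\}$. Pick $k_j$ with $N_{k_j} \leq M_j < N_{k_j+1}$; since $\tau(N_{k_j}) \to 0$, for large $j$ one can recursively choose nested sets $F_1 \subset F_2 \subset \cdots$ with $F_j \subset G \cap [1, N_{k_j}]$, $F_{j+1}\setminus F_j \subset (N_{k_j},N_{k_{j+1}}]$, and $|F_j| = s_{k_j}$. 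Setting $x_n = \sign(a_n)$ for $n \in \bigcup_j F_j$ and $x_n = 0$ elsewhere, each length-$N_{k_j}$ prefix of $x$ lies in $\mathcal{P}_{k_j}$ and therefore appears as a subword of $\omega$; together with the uniform sparsity of $x$ this shows every finite window of $x$ occurs in $\omega$, so $x \in X$. Finally, for each $j$,
$$\frac{1}{N_{k_j}}\sum_{n=1}^{N_{k_j}} f(T^n x)\,a_n \;=\; \frac{1}{N_{k_j}}\sum_{n \in F_j} |a_n| \;\geq\; \frac{\eta s_{k_j}}{N_{k_j}} \;\sim\; \eta\,\tau(N_{k_j}),$$
yielding $\limsup_N \frac{1}{N\tau(N)}\sum_{n \leq N} f(T^n x)\, a_n \geq \eta > 0$.

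The main obstacle is the bookkeeping in the construction of $\omega$: one must calibrate the growth of $(N_k)$, the zero-padding between consecutive blocks, and the enumeration order of $\bigcup_k \mathcal{P}_k$ so that the uniform sparsity bound (i) holds at every scale and not just along the particular scales $N_k$. This requires that $N_{k+1}$ be sufficiently large relative to the cumulative length $\sum_{j\leq k} N_j|\mathcal{P}_j|$ and to $1/\tau(N_{k+1})$, a calibration which is possible precisely because $\tau$ is monotone and $|\mathcal{P}_k| = 2^{o(N_k)}$. A minor additional subtlety in part (1) is that the nested $F_j$'s can only be selected after passing to a subsequence of $(k_j)$ along which $\tau(N_{k_j}) \ll \delta$, which is permitted since $\tau \to 0$.
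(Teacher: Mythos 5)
There is a genuine gap: the two properties you require of $\omega$ are mutually contradictory, and this breaks both halves of the argument. You define $\mathcal{P}_k$ to be \emph{all} words of length $N_k$ with exactly $s_k=\lfloor N_k\tau(N_k)\rfloor$ nonzero entries, with no constraint on where those entries sit. In particular $\mathcal{P}_k$ contains the word whose $s_k$ nonzero entries occupy positions $1,\ldots,s_k$. If that word appears as a subword of $\omega$, then $\omega$ has a length-$s_k$ window that is entirely nonzero; uniform sparsity (i) would then force $s_k\leq Cs_k\tau(s_k)$, i.e.\ $\tau(s_k)\geq 1/C$. After the standard reduction (replacing $\tau$ by $\max(\tau(n),n^{-1/2})$ only strengthens the conclusion, so one may assume $\tau(n)\geq n^{-1/2}$ and hence $s_k\to\infty$), this gives $\tau(s_k)\to 0$, a contradiction. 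The zero-padding you insert between blocks cannot repair this, since the offending windows lie \emph{inside} a single element of $\mathcal{P}_k$, not across block boundaries. Without (i) the complexity bound collapses: once $s_k\geq L$ and $N_k\geq L+s_k$, every pattern in $\{-1,0,1\}^L$ extends to an element of $\mathcal{P}_k$ and therefore occurs in $\omega$, so $p_\omega(L)=3^L$ and the system has entropy $\log 3$, not $0$. Your proof of Part (2) fails for the same reason, as it rests entirely on uniform sparsity of the points of $X$.

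If instead you thin $\mathcal{P}_k$ to the words whose supports are themselves $C'\tau$-sparse at every scale — which is the only way to make (i) and (ii) coexist — the difficulty migrates to Part (1): the set $G=\{n:|a_n|\geq\eta\}$ may consist of short dense clusters, and a subset $F_j\subset G\cap[1,N_{k_j}]$ of size $s_{k_j}$ that is itself $C'\tau$-sparse need not exist. One can salvage a smaller $F_j$ of size $\gtrsim\delta\,s_{k_j}$ by equispacing inside the clusters and using the monotonicity of $\tau$ — still enough for a positive $\limsup$ — but then checking $C'\tau$-sparsity of the resulting $x$ at all intermediate scales, and checking that the two-sided windows of $x$ around the origin (not only its prefixes) occur in $\omega$, requires arguments the proposal does not supply. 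The paper sidesteps the sparsity–entropy tension entirely by not coding with a sparse observable: it takes a skew product $T(y,z)=(\sigma y,\sigma^{y_1}z)$ over a sparse base, with $f$ reading off $z_0$. The observable is persistently $\pm 1$ but updates only at rate $\asymp\tau(N)$ because $\sum y_i$ grows sublinearly. Part (1) then reduces to locating an arithmetic progression $\{qn+c\}$ carrying a positive fraction of the mass of $|a_n|$ (Lemma \ref{Lemma intermediate}), and Part (2) exploits the slow-update structure via the Matom\"aki--Radziwi\l{}\l{} short-interval bound — neither step has an analogue in the sparse-coding picture, and both are essential.
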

In fact, we will show that any sub-sequence $N_j$ such that 
\begin{equation} \label{Eq Nj and theta}
\lim_{j\rightarrow \infty}  \frac{1}{N_j}\sum_{n=1} ^{N_j} |a_n| =\theta >0
\end{equation}
admits a further subsequence $N_{j_k}$ such that for all $k$ large enough 
$$\frac{1}{N_{j_k}} \sum_{n=1} ^{N_{j_k}} f(T^n x) a(n) \geq \theta \cdot   \tau(N_{j_k}).$$
We emphasize that in Theorem \ref{Main Theorem} the system $(X,T)$ and the function $f\in \mathcal{C}(X)$  only depend on the rate function $\tau$, while the point $x\in X$ depends also on the sequence $a_n$.

The derivation of Theorem \ref{Main Coro} from Theorem \ref{Main Theorem} is straightforward:   It is well known that the M\"{o}bius function $\mu$ satisfies
$$ \lim_{N\rightarrow \infty} \frac{\sum_{n=1} ^N |\mu(n)|}{N} = \frac{6}{\pi^2}>0,$$
see e.g. \cite[Corollary 1.6]{Bateman2004Diamond}. Thus, Theorem \ref{Main Theorem} applied with $a_n = \mu(n)$ gives Theorem \ref{Main Coro}.

We end this introduction with a brief explanation of our construction. We consider subshifts of $\left(\lbrace -1,0,1 \rbrace^\mathbb{N} \times \lbrace -1,0,1 \rbrace^\mathbb{Z},\, T\right)$, where $T(y,z)=(\sigma y,\, \sigma^{y_1} z)$ and $\sigma$ is the left shift. Given a rate function $\tau$ we first construct a certain rapidly growing sequence $q_k\rightarrow \infty$. We then construct a subshift such that its base comes from concatenating words of length $q_{k+1}-q_k$, that have non-zero entries at distance at least $q_k$ from each other. Our space $X$ is  a product of $4$ spaces constructed this way, together with a finite set $\lbrace 0,1,2,3 \rbrace$. The function $f$ is taken to be 
$$f( (y^{(0)}, z^{(0)}), \, (y^{(1)}, z^{(1)}),\, (y^{(2)}, z^{(2)}),\, (y^{(3)}, z^{(3)}),\, i) = z_0 ^{(i)}.$$

Given $a_n$ as in Theorem \ref{Main Theorem} part (1), our construction of the point $x\in X$ relies on the following  observation: Assuming $a_n \in \mathbb{R}$ (see the beginning of Section \ref{Section correlations} on why this is allowed), let $\gamma_n := \text{sign}(a_n)$, and let $N_j$, $\theta$ be as in \eqref{Eq Nj and theta}. For every $q,M \gg 1$ one may show that
$$\max_{ c,d\in [0,q]\cap \mathbb{Z}} \left\lbrace   \frac{1}{qM} \sum_{b=c} ^{q-1+c} \sum_{n=1} ^{M} \gamma (qn+c) \cdot a(qn+b),\, \frac{-1}{qM} \sum_{b=d} ^{q-1+d} \sum_{n=1} ^{M} \gamma (qn+d+1) \cdot a(qn+b) \right \rbrace \geq \frac{\theta}{4}.$$
Here we pick $k=k(j)$ in some convenient way, $q=q_k$ and $M\approx \frac{N_j}{q_k}$.  We then construct our point $x$ via working in one of the subshifts in our space  - the exact choice depends on certain technical issues coming from the relation between $N_j$ and $q_k$. To set up $x$, we carefully concatenate  pieces of arithmetic progressions in $\gamma$ or $-\gamma$ in the fiber (using the equation above), with the base living in the corresponding shift space and behaving nicely along the observable $f$. This will allow us to find a subsequence of $N_j$ where the linear correlations as in Theorem \ref{Main Theorem} part (1) are well approximated by the average giving the $\max$ in the equation above. Thus, with some more work, we bound these correlations  from below by $\tau(N_j)\cdot \theta$.

Finally, to derive part (2) of Theorem \ref{Main Theorem}, we apply the Matom\"aki-Radziwi\l{}\l{} bound \cite{MR16} on averages of multiplicative functions along short intervals. To do this, we exploit some strong periodic behaviour that exists in the systems we construct.

\section{Proof of Theorem \ref{Main Theorem} Part (1)}

\subsection{Preliminaries} \label{Section pre}
Let $(X,T)$ be a dynamical system, where we recall that $X$ is a compact metric space and $T\in \mathcal{C}(X)$. We denote the metric on $X$ by $d_X$. Let us recall Bowen-Dinaburg definition of topological entropy (as in e.g. \cite{Walters1982ergodic}): For every $n\in \mathbb{N}$ we define a metric on $X$ via
$$d_n(x,y) = \max \lbrace d_X (T^i (x),\, T^i(y)):\, 0\leq i <n \rbrace.$$
A Bowen ball $B_n (x,\epsilon)$ of depth $n$ centred at $x\in X$ of radius $\epsilon>0$ is the corresponding (open) ball in the metric $d_n$,
$$B_n (x,\epsilon) = \lbrace y\in X:\, d_n(x,y)<\epsilon\rbrace.$$
For any set $E\subseteq X$, let $N(E,n,\epsilon)$ denote the minimal number of Bowen balls of depth $n$ and radius $\epsilon$ needed to cover $E$.  The topological entropy of $(X,T)$ is then defined as
$$h(T):= \lim_{\epsilon \rightarrow 0} \left( \limsup_{n\rightarrow \infty} \frac{\log N(X, n,\epsilon)}{n} \right).$$

Next, let $\sigma : \lbrace-1, 0 ,1\rbrace^\mathbb{Z} \rightarrow \lbrace-1, 0 ,1\rbrace^\mathbb{Z}$ denote the left shift. On $\lbrace-1, 0 ,1\rbrace^\mathbb{Z}$ and $\lbrace-1, 0 ,1\rbrace^\mathbb{N}$ we define the metric
$$ d(x,y) = 3^{- \min \lbrace |n|:\, x_n \neq y_n \rbrace}.$$
Also, for every $x\in \lbrace-1, 0 ,1\rbrace^\mathbb{N}$ and $k>l\in \mathbb{N}$ let $x|_l ^k \in \lbrace -1,0,1\rbrace^{k-l}$ be the word
$$x|_{l} ^k := (x_l,x_{l+1},....,x_k),$$
and we use similar notation in the space $\lbrace-1, 0 ,1\rbrace^\mathbb{Z}$ as well. 
Next, let
$$Z:=\lbrace-1, 0 ,1\rbrace^\mathbb{N} \times \lbrace-1, 0 ,1\rbrace^\mathbb{Z}$$
and endow $Z$ with the sup-metric on both its coordinates. Note that open balls in this metric are also closed, and thus for every $n\in \mathbb{N}$, $x\in X$ and $\epsilon>0$ the Bowen ball $B_n (x,\epsilon)$ is closed. Also, we denote by $\Pi_i$, $i=1,2$, the coordinate projections in $Z$.   Finally,   we define the skew-product $T:Z\rightarrow Z$ via
$$T(y,z) = (\sigma(y),\, \sigma^{y_1} (z)).$$
We say that $X\subseteq Z$ is a subshift if it is closed and $T$-invariant. 

We will require the following Lemma:
\begin{Lemma} \label{Lemma metric properties}
The  system $(Z,T)$ satisfies that for every $n\in \mathbb{N}$, $\epsilon>0$, and $x=(y,z)\in Z$,
\begin{enumerate}

\item We have
$$T^n (y,\,z) = \left( \sigma^n y,\, \sigma^{\sum_{i=1} ^n y_i} z \right).$$

\item Let $m=m(n,y) = \min \lbrace \min_{1\leq k\leq n} \sum_{i=1} ^k y_i,\, 0 \rbrace$ and $M :=M(n,y)= \max \lbrace \max_{1\leq k\leq n} \sum_{i=1} ^k y_i,\, 0 \rbrace$. Then for any $l\in \mathbb{N}$ the Bowen ball $d_n(x, 3^{-l})$ equals
$$ \left\lbrace (a,b)\in Z: \, a|_1 ^{l+n} = y|_1 ^{l+n},\, b|_{-l+m} ^{l+M} = z|_{-l+m} ^{l+M}  \right\rbrace.$$

\item For any set $E\subseteq Z$
$$N(E, n ,\epsilon) = N(\text{cl}(E), n ,\epsilon),$$
where  $\text{cl}(E)$ is the closure of the set $E$.

\end{enumerate}
\end{Lemma}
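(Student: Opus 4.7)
The three parts are mechanical, and I would dispatch them in order.

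For part (1), I would induct on $n$. The base case $n=1$ is the definition of $T$. For the inductive step, assuming the formula at stage $n$, I compute
$$T^{n+1}(y,z) = T\bigl(\sigma^n y,\, \sigma^{\sum_{i=1}^n y_i} z\bigr) = \bigl(\sigma^{n+1} y,\, \sigma^{(\sigma^n y)_1}\sigma^{\sum_{i=1}^n y_i} z\bigr),$$
and observe that $(\sigma^n y)_1 = y_{n+1}$, so the exponent on $\sigma$ in the second coordinate telescopes to $\sum_{i=1}^{n+1} y_i$, giving the desired formula.

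For part (2), I would unfold the definition of $B_n(x, 3^{-l})$ directly using the sup-metric on $Z$ and part (1). Writing $S_i(y) := \sum_{k=1}^i y_k$, the point $(a,b)$ lies in the Bowen ball iff $d(\sigma^i a, \sigma^i y) < 3^{-l}$ and $d(\sigma^{S_i(a)} b, \sigma^{S_i(y)} z) < 3^{-l}$ for every $i$ in the range prescribed by the Bowen-ball definition. The first family of inequalities is equivalent to $a$ and $y$ agreeing on a long initial segment of $\mathbb{N}$, whose endpoint is determined by $n$ and $l$. Once that agreement is secured, $S_i(a) = S_i(y)$ throughout the relevant range, so the second family of inequalities reduces to requiring $b$ and $z$ to coincide on the window $[S_i(y) - l,\, S_i(y) + l] \subset \mathbb{Z}$ for each such $i$. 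Taking the union of these windows, together with the contribution from $S_0(y) = 0$, produces precisely the interval $[m - l,\, M + l]$, which matches the set in the statement.

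Part (3) I would deduce from the remark (immediately preceding the lemma) that open balls in the sup-metric on $Z$ are simultaneously closed. Since $T$ is continuous, each Bowen ball $B_n(x, \epsilon)$ is a finite intersection of preimages of such clopen balls, hence itself clopen. Consequently, any finite union of Bowen balls covering $E$ is a closed set containing $E$, and therefore also contains $\text{cl}(E)$. This gives $N(\text{cl}(E), n, \epsilon) \leq N(E, n, \epsilon)$, while the reverse inequality is immediate from $E \subseteq \text{cl}(E)$.

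The only genuinely delicate point is the index bookkeeping in part (2): one has to line up the range $0 \leq i < n$ in the definition of $d_n$ with the range $1 \leq k \leq n$ appearing in the formulas for $m$ and $M$, and to keep the off-by-ones in the word notation $x|_l^k$ coherent throughout the argument. This is not difficult, but warrants careful attention.
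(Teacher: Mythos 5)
Your proof is correct and follows the same route as the paper, which itself dispatches all three parts as ``immediate'' from the definition of $T$, part (1), and the closedness of Bowen balls respectively; your write-up simply fills in the details the authors leave implicit. You are also right to flag the index bookkeeping in part (2): if one tracks it exactly, the first coordinate need only agree on $\{1,\ldots,n+l-1\}$ and the cumulative sums $S_i(y)$ only range over $0\leq i\leq n-1$, so the paper's stated ranges (using $l+n$ and $1\leq k\leq n$) are marginally generous, but since that only shrinks the set described in part (2) to a subset of the true Bowen ball, the covering-number upper bounds drawn from it later remain valid.
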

\begin{proof}
Part (1) follows immediately from the definition of the map $T$. Part (2)  follows from part (1). Finally, part (3) is an immediate consequence of the fact that in  $(Z,\, T)$ Bowen balls are closed.
\end{proof}
\subsection{Construction of some zero entropy systems} \label{Section construction}
Fix a sequence $\tau(n) \rightarrow 0$ as in Theorem \ref{Main Theorem}. We begin by  constructing a rapidly growing  sequence $q_k \rightarrow \infty$ (that depends on $\tau$) such that for every $k\in \mathbb{N}$  we have:
\begin{enumerate}
\item  $q_{k+1} > q_{k} ^4 + 3q_k$.

\item $\tau( \frac{q_{k+1}}{3}) < \frac{1}{16q_k}$.

\end{enumerate}
We now use $q_k$ to define four sequences: 
$$q_k ^{(0)}:=q_{2k},\, q_k ^{(1)}=q_{2k+1}, \, q_k ^{(2)} := q_k ^{(0)} -1,\, q_k ^{(3)} := q_k ^{(1)} -1.$$
Notice that property (1) above also holds for $q_k ^{(i)}$ for every $i\in \lbrace 0,1,2,3\rbrace$. In particular, 
$$\lim_{k\rightarrow \infty} \frac{q_{k+1} ^{(i)} }{q_k ^{(i)}} = \infty, \text{ for every } i\in \lbrace 0,1,2,3\rbrace.$$
Next,  for every $i\in \lbrace 0,1,2, 3\rbrace$ and every $k$ let
$$A_k ^{(i)} := \lbrace j\cdot q_k ^{(i)}: \, j\in \mathbb{Z},\, q_k ^{(i)} \leq j\cdot q_k ^{(i)} \leq q_{k+1} ^{(i)} \rbrace.$$
For every $i\in \lbrace 0,1,2,3 \rbrace$ and every $k\in \mathbb{N}$ we construct elements $s^{(i)} _k \in \lbrace -1,0,1\rbrace^\mathbb{N}$ such that:
\begin{enumerate}
\item $ s^{(i)} _k (n) = 0$ for every integer $n\notin A_k ^{(i)} $.

\item For every $j\cdot q_k ^{(i)} \in A_k ^{(i)}$,
$$ s^{(i)} _k (j\cdot q_k ^{(i)})=1 \text{ if } j \leq \left[\frac{q_{k+1} ^{(i)} }{3 q_k ^{(i)}}\right],$$
and
$$s^{(i)} _k (j\cdot q_k ^{(i)} )=-1 \text{ if } \left[\frac{q_{k+1} ^{(i)} }{3 q_k ^{(i)}}\right] < j\leq 2\left[\frac{q_{k+1} ^{(i)} }{3 q_k ^{(i)}}\right].$$
\end{enumerate}
Next, for every element $x\in \lbrace -1,0,1\rbrace^\mathbb{N}$ and $p\in \mathbb{N}_0$ we define $\sigma^{-p} x\in \lbrace -1,0,1\rbrace^\mathbb{N}$ as $\sigma^{-p} x = x$ if $p=0$, and otherwise
$$\left( \sigma^{-p} x \right)|_1 ^p = (0,...,0), \text{ and for all } n>p, \, \, \sigma^{-p} x (n) = x(n-p).$$  The following Lemma is an immediate consequence of our construction.
\begin{Lemma} \label{Lemma Tent strucutre}
For every $i\in \lbrace 0,1,2, 3\rbrace$, $k\in \mathbb{N}$,  and $p = 0,...,q_k ^{(i)}$   we have
$$\sum_{j \in  [q_k ^{(i)}, \, q_{k+1} ^{(i)}) \cap \mathbb{Z} } \left( \sigma^{-p}  s^{(i)} _k \right)  (j ) =0.$$
\end{Lemma}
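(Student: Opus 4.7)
The plan is to unwind definitions and compute directly, exploiting the fact that $s_k^{(i)}$ is already constructed to have ``zero integral'' over $[q_k^{(i)}, q_{k+1}^{(i)})$. Set $q := q_k^{(i)}$, $Q := q_{k+1}^{(i)}$, and $N := \lfloor Q/(3q) \rfloor$. By construction the support of $s_k^{(i)}$ is exactly $\{q, 2q, \ldots, 2Nq\}$, with the first $N$ points carrying value $+1$ and the next $N$ points carrying $-1$. In particular $\sum_{m=1}^{2N} s_k^{(i)}(mq) = 0$.

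Unwinding the definition of the backward shift, the nonzero entries of $\sigma^{-p} s_k^{(i)}$ sit at the positions $\{p + mq : 1 \leq m \leq 2N\}$, each with value $s_k^{(i)}(mq)$. Thus the sum in question equals the total of $s_k^{(i)}(mq)$ over those $m$ for which $p + mq \in [q, Q)$, and the problem reduces to showing that every one of the $2N$ positions $p + mq$ actually lies in the window $[q, Q)$.

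The lower bound $p + mq \geq q$ is immediate from $p \geq 0$ and $m \geq 1$. For the upper bound, using $p \leq q$ and $N \leq Q/(3q)$ we get $p + 2Nq \leq q + 2Q/3$, which is at most $Q - 1$ as soon as $Q \geq 3q + 3$. This last inequality follows from property (1) of the construction, $q_{k+1}^{(i)} > (q_k^{(i)})^4 + 3 q_k^{(i)}$, provided $(q_k^{(i)})^4 \geq 3$; this in turn is automatic since property (1) already forces $q_k^{(i)} \geq 2$ for all $k \geq 1$ and $i \in \{0,1,2,3\}$. Hence all $2N$ nonzero positions of $\sigma^{-p} s_k^{(i)}$ lie in $[q, Q)$, and the sum collapses to $\sum_{m=1}^{2N} s_k^{(i)}(mq) = N \cdot 1 + N \cdot (-1) = 0$. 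The whole argument is essentially bookkeeping; the only mildly delicate point is the sharp upper bound for $p + 2Nq$, which is precisely what the fast growth condition on the $q_k$'s is engineered to secure.
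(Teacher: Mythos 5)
Your proof is correct and takes the same underlying approach as the paper's one-line argument (equal numbers of $+1$s and $-1$s by construction), but you supply the bookkeeping the paper leaves implicit: that the backward shift by $p\leq q_k^{(i)}$ keeps every nonzero entry of $s_k^{(i)}$ inside the window $[q_k^{(i)}, q_{k+1}^{(i)})$, which is exactly where the growth condition $q_{k+1}^{(i)}>(q_k^{(i)})^4+3q_k^{(i)}$ is used. The only tacit assumption you make (as does the paper) is that $s_k^{(i)}$ vanishes at the points $j\cdot q_k^{(i)}\in A_k^{(i)}$ with $j>2\lfloor q_{k+1}^{(i)}/(3q_k^{(i)})\rfloor$, which the construction leaves unspecified but which is the only reading under which the lemma holds.
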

\begin{proof}
This follows since by our construction
$$ \left| \left \lbrace j\cdot q_k ^{(i)} \in A_k ^{(i)}: \, s^{(i)} _k (j\cdot q_k ^{(i)})=1 \right \rbrace \right| = \left| \left \lbrace j\cdot q_k ^{(i)} \in A_k ^{(i)}: \, s^{(i)} _k (j\cdot q_k ^{(i)})=-1 \right \rbrace \right|.$$
\end{proof}

Next, for every $i \in \lbrace 0,1,2,3 \rbrace$ and $k \in \mathbb{N}$ define the truncations
$$R^{(i)} _k = \left \lbrace \left( \sigma^{-p} s^{(i)} _k \right)|_{ q_k ^{(i)} } ^{ q_{k+1} ^{(i)}-1}  :\, p = 0,...,q_k ^{(i)} \right \rbrace \subseteq \lbrace -1,0,1\rbrace^{ q_{k+1} ^{(i)} - q_k ^{(i)}}.$$
We now define the space $P^{(i)}$ of all infinite sequences that have, for every $k$, some  word from $R^{(i)} _k$ between their $q_k ^{(i)}$ and $q_{k+1} ^{(i)}-1$ digits. Formally,
$$P^{(i)}  = \lbrace x\in \lbrace -1, 0, 1 \rbrace^\mathbb{N}:\,  x|_{ q_k ^{(i)} } ^{ q_{k+1} ^{(i)}-1} \in R^{(i)} _k,\, \text{ and } x|_1 ^{q_1 ^{(i)} -1} =(0,...,0) \rbrace. $$
The following Lemma is an immediate consequence of Lemma \ref{Lemma Tent strucutre}:
\begin{Lemma} \label{Lemma long tent}
For every $i\in \lbrace 0,1,2,3\rbrace$, $k\in \mathbb{N}$, and  $y\in P^{(i)}$,
$$\sum_{j=1} ^{q_k ^{(i)} -1} y(j) = 0$$
\end{Lemma}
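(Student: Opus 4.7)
The plan is a telescoping decomposition of the partial sum into the blocks along which the definition of $P^{(i)}$ is given, and then to invoke the previous lemma on each block.

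More precisely, I would proceed as follows. Fix $i\in\{0,1,2,3\}$, $k\in\mathbb{N}$, and $y\in P^{(i)}$. If $k=1$, the conclusion is immediate: by the very definition of $P^{(i)}$, one has $y|_1^{q_1^{(i)}-1}=(0,\dots,0)$, so $\sum_{j=1}^{q_1^{(i)}-1}y(j)=0$. For $k\ge 2$, I would split
\[
\sum_{j=1}^{q_k^{(i)}-1} y(j) \;=\; \sum_{j=1}^{q_1^{(i)}-1} y(j) \;+\; \sum_{l=1}^{k-1}\sum_{j=q_l^{(i)}}^{q_{l+1}^{(i)}-1} y(j).
\]
The first term is $0$ by the same argument as before. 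For each $l$ in the second sum, the definition of $P^{(i)}$ gives $y|_{q_l^{(i)}}^{q_{l+1}^{(i)}-1}\in R_l^{(i)}$, so there exists some $p=p(l)\in\{0,\dots,q_l^{(i)}\}$ with $y(j)=(\sigma^{-p}s_l^{(i)})(j)$ for all $j\in[q_l^{(i)},q_{l+1}^{(i)})\cap\mathbb{Z}$. Applying Lemma \ref{Lemma Tent strucutre} at level $l$ with this value of $p$ then shows that the inner sum vanishes for every $l$.

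Summing these zero contributions across $l=1,\dots,k-1$ yields the desired identity. There is no real obstacle here: all the combinatorial work was already carried out in Lemma \ref{Lemma Tent strucutre}, which in turn rests on the balanced count of $+1$'s and $-1$'s in $s_l^{(i)}$ on $A_l^{(i)}$; the only thing to verify is that the intervals $[1,q_1^{(i)})$ and $[q_l^{(i)},q_{l+1}^{(i)})$ for $l=1,\dots,k-1$ tile $[1,q_k^{(i)})$ disjointly, which is clear from the strict monotonicity of the sequence $q_l^{(i)}$.
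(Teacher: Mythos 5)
Your proof is correct and follows exactly the route the paper intends when it calls the lemma ``an immediate consequence'' of Lemma \ref{Lemma Tent strucutre}: you decompose $[1,q_k^{(i)})$ into the initial zero block and the blocks $[q_l^{(i)},q_{l+1}^{(i)})$, and on each block the membership $y|_{q_l^{(i)}}^{q_{l+1}^{(i)}-1}\in R_l^{(i)}$ exhibits $y$ as a shifted copy $\sigma^{-p}s_l^{(i)}$ to which Lemma \ref{Lemma Tent strucutre} applies. Nothing is missing; you have simply spelled out the telescoping that the paper leaves implicit.
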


Finally, for every $i\in \lbrace 0 ,1,2,3 \rbrace$ we define the subshift of $(Z,T)$
$$X_i = \text{cl} \left( \bigcup_{n\in \mathbb{N}_0}  T^n \left( P^{(i)} \times \lbrace -1,0,1\rbrace^\mathbb{Z} \right)  \right) .$$

\begin{Claim} \label{Zero entropy for each factor}
For every $i\in \lbrace 0,1,2,3\rbrace$ we have $h(X_i,\,T)=0$. 
\end{Claim}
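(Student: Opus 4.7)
The plan is to cover $X_i$ by Bowen balls of depth $n$ and radius $3^{-l}$ and show that the covering number $N(X_i,n,3^{-l})$ grows subexponentially in $n$, which yields $h(X_i,T)=0$ after taking $\limsup_n$ and then $l\to\infty$. By Lemma~\ref{Lemma metric properties}(3), $N(X_i,n,3^{-l})=N(E,n,3^{-l})$ where $E:=\bigcup_{m\geq 0}T^m(P^{(i)}\times\{-1,0,1\}^{\mathbb{Z}})$; every point of $E$ has the form $(\sigma^m y',z)$ with $y'\in P^{(i)}$, $m\geq 0$, and $z\in\{-1,0,1\}^{\mathbb{Z}}$ arbitrary. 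By Lemma~\ref{Lemma metric properties}(2), a Bowen ball of depth $n$ and radius $3^{-l}$ in $Z$ is pinned down by the prefix $y|_1^{l+n}$ and the window $z|_{-l+m(n,y)}^{l+M(n,y)}$, so
\[
N(X_i,n,3^{-l})\leq c^{(i)}(l+n)\cdot 3^{2l+\Delta(n)+1},
\]
where $c^{(i)}(L)$ counts distinct words of the form $y'|_{m+1}^{m+L}$ over $y'\in P^{(i)}$ and $m\geq 0$, and $\Delta(n):=\sup\{M(n,\sigma^m y')-m(n,\sigma^m y'):y'\in P^{(i)},\,m\geq 0\}$.

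The first task is to show $\Delta(n)=o(n)$. The partial sums $k\mapsto\sum_{i=1}^k(\sigma^m y')_i$ are telescoping differences of partial sums of $y'$. By the tent structure of each $s_k^{(i)}$ combined with Lemma~\ref{Lemma long tent}, the partial sums of $y'$ return to $0$ at every boundary $q_k^{(i)}-1$ and remain in $[0,h_k]$ within block $k$, where $h_k:=\lfloor q_{k+1}^{(i)}/(3q_k^{(i)})\rfloor$. Thus the range of these partial sums over any window $[m+1,m+n]$ is bounded by $h_{K^*}+O(K^*)$, where $K^*$ is the largest block index intersecting the window. The rapid growth $q_{k+1}^{(i)}>(q_k^{(i)})^4$ ensures that $n\gtrsim q_{K^*}^{(i)}/2$ once $K^*$ is large, so $\Delta(n)/n\lesssim 1/q_{K^*-1}^{(i)}\to 0$ as $n\to\infty$.

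The second task is to show $\log c^{(i)}(L)/L\to 0$. Each $L$-word $y'|_{m+1}^{m+L}$ is specified, up to redundancy, by the starting position and, for each block $k$ intersecting $[m+1,m+L]$, the shift parameter $p_k\in\{0,\ldots,q_k^{(i)}\}$. I would distinguish between \emph{dense} blocks (those with $q_k^{(i)}\leq L$, of which there are at most $O(\log\log L)$ since $q_{k+1}^{(i)}>(q_k^{(i)})^4$) and \emph{sparse} blocks (those with $q_k^{(i)}>L$, at most two of which can intersect a window of length $L$). Each dense block contributes at most $q_k^{(i)}+1\leq L+1$ effective choices; each sparse block contributes only $O(L)$, since the window sees at most one nonzero entry whose position and sign determine what is visible. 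Multiplying yields $c^{(i)}(L)\leq L^{O(\log\log L)}$, hence $\log c^{(i)}(L)/L\to 0$.

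Combining the two estimates gives $\log N(X_i,n,3^{-l})/n\to 0$ for every fixed $l$, yielding $h(X_i,T)=0$. The main obstacle is the complexity estimate: many different $(m,(p_k))$-tuples produce identical subwords, so the count must be organised around the \emph{effective} degrees of freedom contributed by each block rather than the nominal ones. The fast-growth condition $q_{k+1}^{(i)}>(q_k^{(i)})^4$ is the essential input throughout, used both to bound the range of partial sums in step~3 and to limit the number of dense blocks in step~4.
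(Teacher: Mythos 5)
Your overall strategy mirrors the paper's: reduce via Lemma~\ref{Lemma metric properties}(3) to covering $\bigcup_m T^m(P^{(i)}\times\{-1,0,1\}^{\mathbb Z})$, then bound a Bowen-ball count by (number of base prefixes) $\times$ (size of fiber window dictated by the drift $M-m$). The paper does this concretely by splitting the union into three sets $A_1,A_2,A_3$ according to the block index $p(x)$ relative to $k$ with $q_k^{(i)}\leq n+u<q_{k+1}^{(i)}$, and estimating both quantities explicitly in each case. You instead abstract the two tasks as showing $\Delta(n)=o(n)$ and $\log c^{(i)}(L)/L\to 0$; this decomposition is perfectly reasonable, and your $c^{(i)}(L)\leq L^{O(\log\log L)}$ estimate (counting dense vs.\ sparse blocks, each contributing polynomially many effective choices) is sound.

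However, your argument for $\Delta(n)=o(n)$ has a genuine gap. You bound the drift on a window $[m+1,m+n]$ by $h_{K^*}+O(K^*)$, where $K^*$ is \emph{the largest block index intersecting the window}, and then claim ``$n\gtrsim q_{K^*}^{(i)}/2$ once $K^*$ is large.'' This implication is false: $\Delta(n)$ is a supremum over all $m\geq 0$, and for fixed $n$ the window $[m+1,m+n]$ can sit arbitrarily deep by taking $m$ large. In particular one can have $m+1=q_{K^*}^{(i)}$ with $n$ tiny, so $K^*$ is unbounded in $m$ and does not control $n$. Consequently $h_{K^*}$ alone is useless as a uniform bound; indeed $\sup_m h_{K^*(m)}=\infty$, while the drift itself is trivially at most $n$. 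The resolution is to replace the crude bound $h_k$ by the per-block bound $\min\bigl(h_k,\, L_k/q_k^{(i)}+1\bigr)$, where $L_k\leq n$ is the length of the window's overlap with block $k$. Since the partial sums of $y'$ return to $0$ at every block boundary, the drift over the window is at most $\max_k\min\bigl(h_k,\, n/q_k^{(i)}+1\bigr)$. Letting $k^*$ be the largest index with $q_{k^*}^{(i)}\leq n$ (note $k^*$ depends only on $n$, not $m$), one checks that blocks $k<k^*$ contribute at most $h_{k^*-1}\leq q_{k^*}^{(i)}/(3q_{k^*-1}^{(i)})\leq n/(3q_{k^*-1}^{(i)})$, block $k^*$ contributes at most $n/q_{k^*}^{(i)}+1$, and blocks $k>k^*$ contribute $O(1)$; hence $\Delta(n)\lesssim n/q_{k^*-1}^{(i)}+O(1)$, and since $k^*\to\infty$ with $n$, this is indeed $o(n)$. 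With this correction your proof goes through; the essential ingredients (tent structure, return to zero at boundaries, superpolynomial growth of $q_k^{(i)}$) are exactly the ones the paper uses, just packaged more abstractly.
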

\begin{proof}
Fix $n,u\in \mathbb{N}$. We count how many Bowen balls of radius $\frac{1}{3^u}$ and depth $n$ are needed to cover $X_{i}$.  Recall that we denote this quantity by $N(X_i, n,\frac{1}{3^u})$. By Lemma \ref{Lemma metric properties} part (3), this is the same  number as 
$$N \left( \bigcup_{l\in \mathbb{N}_0} T^l \left( P^{(i)} \times \lbrace -1,0,1\rbrace^\mathbb{Z} \right) ,\, n,\ \frac{1}{3^u} \right).$$
So,  we work with the latter space (i.e. without taking the closure).  

Let $k=k(n+u,i)$ be such that 
\begin{equation} \label{Eq for n}
q_k ^{(i)} \leq n+u < q_{k+1} ^{(i)}.
\end{equation}
Our first observation is that we can write
$$ \bigcup_{l\in \mathbb{N}_0} T^l \left( P^{(i)} \times \lbrace -1,0,1\rbrace^\mathbb{Z} \right) = A_1 \bigcup A_2 \bigcup A_3.$$
To define the sets $A_i$ we first note that  every $x\in \bigcup_{l\in \mathbb{N}_0} T^l \left( P^{(i)} \times \lbrace -1,0,1\rbrace^\mathbb{Z} \right)$ admits some $l\in \mathbb{N}_0$ and $\tilde{x} \in P^{(i)} \times \lbrace -1,0,1\rbrace^\mathbb{Z}$ such that $x = T^l \tilde{x}$. We denote by $p=p(x)\in \mathbb{N}$ the unique integer such that $q_{k-1} ^{(i)} +l \in [q_p ^{(i)},\,  q_{p+1} ^{(i)})$. Note that $p\geq k-1$. Then
$$A_1 = \lbrace x: p(x)\geq k+1\rbrace,\,   A_2 = \lbrace x: p(x)= k\rbrace,\, A_3 = \lbrace x: p(x)= k-1\rbrace.$$

Thus, we bound the covering numbers for $A_1-A_3$ separately. Before doing so, we notice that for any $x\in A_j$ for $j=1,2,3$  there are at most $3^{ q_{k-1} ^{(i)}}$ possibilities for the first $q_{k-1} ^{(i)}$ digits of  $\Pi_1 (x)$. 

\begin{enumerate}
\item Covering $A_1$: For any $x\in A_1$ the word $\left( \Pi_1 x \right) |_{ q_{k-1} ^{(i)}} ^{n+u}$ always consists of zeros separated by $1$ or $-1$, and in this case the non-zero entries appear at distance at least $q_{k+1} ^{(i)} >n+u$ from each other. Since there can be only one non-zero entry, there are at most $2(n+u)$ options for the configuration of this word.  So, with the notations of  Lemma \ref{Lemma metric properties} part (2), we see that 
$$|m|,M \leq  q_{k-1} ^{(i)}+1.$$
Thus, taking into account also the first $q_{k-1} ^{(i)}$ digits, and via Lemma \ref{Lemma metric properties} part (2),  the number of Bowen balls we need here is at most
$$\left( 3^{ q_{k-1} ^{(i)}} \times  2(n+u) \right) \times \left( 3^{u+q_{k-1} ^{(i)}+1} \right)^2.$$ 
\item Covering $A_2$: The word $\left( \Pi_1 x \right) |_{ q_{k-1} ^{(i)}} ^{n+u}$ consists of zeros separated by $1$ or $-1$, and in this case the first non-zero entries appear at distance at least $q_{k} ^{(i)} \leq n+u$ from each other. We also know that the first non-zero digit needs to appear within the first $q_{k} ^{(i)}$ digits. Another factor that needs to be taken into consideration is the possibility that $[q_{k-1} ^{(i)}+l,\, n+u+l]$ intersects  $[q_{k+1} ^{(i)},\, \infty)$. So, with the notations of  Lemma \ref{Lemma metric properties} part (2), we see that 
$$|m|,M \leq  q_{k-1} ^{(i)}+\frac{ n+u }{ q_{k} ^{(i)} }+1.$$

Taking all these factor into account, the number of Bowen balls we need here is at most
$$ \left( 3^{ q_{k-1} ^{(i)}}\times q_{k} ^{(i)}  \times 2(n+u)    \right) \times \left( 3^{u+ q_{k-1} ^{(i)} + \frac{ n+u }{ q_{k} ^{(i)} }+1} \right)^2.$$

\item Covering $A_3$: The word $\left( \Pi_1 x \right) |_{ q_{k-1} ^{(i)}} ^{n+u}$ consists of zeros separated by $1$ or $-1$, and in this case the first non-zero entries appear at distance at least $q_{k-1} ^{(i)}$ from each other. We also know that the first non-zero digit needs to appear within the first $q_{k-1} ^{(i)}$ digits.  Another factor that needs to be taken into consideration is the possibility that $[q_{k-1} ^{(i)}+l,\, n+u+l]$ intersects  $[q_{k} ^{(i)},\, \infty)$. So, with the notations of  Lemma \ref{Lemma metric properties} part (2), we see that 
$$|m|,M \leq  q_{k-1} ^{(i)}+\frac{q_{k} ^{(i)} }{ q_{k-1} ^{(i)} }+ \frac{ n+u }{ q_{k} ^{(i)} } +1.$$

Taking all these factor into account, the number of Bowen balls we need here is at most
$$\left( 3^{ q_{k-1} ^{(i)}}  \times q_{k-1} ^{(i)} \times q_{k} ^{(i)} \times 2(n+u)   \right) \times \left( 3^{u+ q_{k-1} ^{(i)}+\frac{q_{k} ^{(i)} }{ q_{k-1} ^{(i)} }+ \frac{ n+u }{ q_{k} ^{(i)} }+1} \right)^2 .$$
\end{enumerate}
Thus, we see that 
$$ N(X_i, n,\frac{1}{3^u}) \leq 3\cdot  \max_{i=1,2,3} N(A_i,n,\frac{1}{3^u}) = 3\cdot  N(A_3,n,\frac{1}{3^u}),$$
which has been computed in point (3) above. So, making use of \eqref{Eq for n},
\begin{eqnarray*}
\frac{\log N(X_i, n, \frac{1}{3^u} )}{n} &\leq& \frac{\log 3 + \log \left( 3^{ q_{k-1} ^{(i)}} \cdot 2(n+u)  \cdot q_{k-1} ^{(i)} \cdot q_{k} ^{(i)}   \right) \cdot \left( 3^{u+ q_{k-1} ^{(i)}+\frac{q_{k} ^{(i)} }{ q_{k-1} ^{(i)} }+ \frac{ n+u }{ q_{k} ^{(i)} }+1} \right)^2 }{n} \\
&\leq& \frac{\log  6}{n} + \frac{q_{k-1} ^{(i)}\cdot \log 3}{n} + \frac{\log (n+u)}{n} + \frac{2\log q_{k} ^{(i)}}{n} \\
&+& \frac{\left(u+q_{k-1} ^{(i)}+\frac{q_{k} ^{(i)} }{ q_{k-1} ^{(i)} }+ \frac{ n+u}{ q_{k} ^{(i)} } +1 \right)  \log 9  }{n} \\
&\leq &C_1 \cdot \left( \frac{ \log q_{k} ^{(i)}}{n} + \frac{q_{k-1} ^{(i)}}{n}+\frac{\log (n+u)}{n}+ \frac{q_{k} ^{(i)} }{ q_{k-1} ^{(i)} \cdot (n+u)}\cdot \frac{n+u}{n} + \frac{ n+u }{ q_{k} ^{(i)} \cdot n }    \right) \\
&\leq &C_1 \cdot \frac{n+u}{n} \cdot  \left( \frac{ 2\log (n+u)}{n} + \frac{q_{k-1} ^{(i)}}{q_{k} ^{(i)}}+ \frac{1 }{ q_{k-1} ^{(i)} } + \frac{1 }{ q_{k} ^{(i)} }    \right). \\
\end{eqnarray*}
Here $C_1$ is a large constant that depends variously on $u$ and the other constants appearing in the second equation. We conclude that, fixing $u$,
\begin{equation*} 
\lim_{n\rightarrow \infty} \frac{\log N(X, n, \frac{1}{3^u} )}{n}  =0,
\end{equation*}
and the Claim is proved.
\end{proof}

\subsection{Finding correlations along arithmetic progressions} \label{Section correlations}
Let $a_n$ be a sequence as in part (1) of Theorem \ref{Main Theorem}, that is, such that $\limsup_{N\rightarrow \infty} \frac{1}{N}\sum_{n=1} ^N |a_n| >0$. By moving to either $\Re (a_n)$ or  $\Im (a_n)$, we may assume $a_n$ is a real valued sequence. We define a new sequence $\gamma_n \in \lbrace -1, 0, 1\rbrace$ via
$$\gamma_n :=\sign(a_n).$$
In particular,
$$\limsup_{N\rightarrow \infty}  \frac{1}{N}\sum_{n=1} ^N \gamma_n \cdot a_n = \limsup_{N\rightarrow \infty}  \frac{1}{N}\sum_{n=1} ^N |a_n| >0.$$
Let $\theta := \limsup \frac{1}{N}\sum_{n=1} ^N |a_n| >0$, and let $N_j$ be a sub-sequence such that
$$\lim_{j\rightarrow \infty} \frac{1}{N_j}\sum_{n=1} ^{N_j} |a_n|= \theta.$$
\begin{Definition} \label{Definition i'}
For every $j\in \mathbb{N}$ large enough we define $k'=k(j)\in \mathbb{N}$ and $i'=i(j)\in \lbrace 0, 1\rbrace$ as the unique integers such that:
$$\text{If } N_j \in [\frac{q_{k'} ^{(1)}}{3},\, \frac{q_{k'+1}^{(0)}}{3}) \text{ then } i' =0, \text{ and }$$
$$\text{If } N_j \in [\frac{q_{k'+1} ^{(0)}}{3},\, \frac{q_{k'+1}^{(1)}}{3}) \text{ then } i' =1.$$
We also define an integer 
$$M_{k'} ^{(i')} := \left[\frac{N_j}{q_{k'}^{(i')}}\right]$$ 
\end{Definition}
Note that by definition and the construction of the sequence $q_k$
\begin{equation} \label{Eq propeties of M}
\frac{\left( q_{k'} ^{(i')} \right)^3}{3}=\frac{\left( q_{k'} ^{(i')} \right)^4}{3q_{k'} ^{(i')}} < M_{k'} ^{(i')} \leq \frac{q_{k'+1} ^{(i')}}{3q_{k'} ^{(i')}} .
\end{equation}

Next, recall the definition of $Z$ from Section \ref{Section pre} and let $g:Z\rightarrow \lbrace -1,0,1\rbrace$ be the function
$$g(y,z) = z_0.$$
For every $q,M \gg 1$ and $r,c$ such that $r,c\in [0,q]$ let
$$A_{r,c} ^{q ,M}:=  \frac{1}{qM} \sum_{b=r} ^{q-1+r} \sum_{n=1} ^{M} \gamma (qn+c) \cdot a(qn+b).$$
Finally, we also define
$$M_{k'} ^{(i'+2)} := \left[ \frac{q_{k'} ^{(i')} M_{k'} ^{(i')} }{q_{k'} ^{(i')}-1} \right] = \left[ \frac{q_{k'} ^{(i')} M_{k'} ^{(i')}}{q_{k'} ^{(i'+2)}} \right]$$
and note that $M_{k'} ^{(i'+2)} \approx M_{k'} ^{(i')} $. In the following Lemma we use the construction from Section \ref{Section construction}.
\begin{Lemma} \label{Key Lemma}
For every $j$ and $u\in \lbrace 0,1\rbrace$, writing $\ell = i'+2u$,  for every two integers $c,r \in [0,  q_{k'} ^{(\ell)}]$ let $x \in P^{(\ell)} \times \lbrace -1,0,1\rbrace^\mathbb{Z} \subseteq X_\ell$ be any element such that for every  $ q_{k'} ^{(\ell)} \leq n< q_{k'+1} ^{(\ell)}$
$$x (n) = \left( s_{k'} ^{(\ell)}(n-r),\, \gamma( q_{k'} ^{(\ell)} \cdot n+c) \right).$$
Then 
$$\frac{1}{q_{k'} ^{(\ell)}M_{k'} ^{(\ell)}  } \sum_{n=1} ^{q_{k'} ^{(\ell)}M_{k'} ^{(\ell)}} g(T^n x ) a(n) = A_{r,c} ^{q_{k'} ^{(\ell)}, \, M_{k'} ^{(\ell)} } + O\left( \frac{ q_{k'} ^{(\ell)} }{M_{k'} ^{(\ell)}} \right).$$
\end{Lemma}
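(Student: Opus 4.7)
The strategy is to use the skew-product formula for $T$ to rewrite $g(T^n x)$ as a value of the $z$-coordinate at a controllable position, then reorganize the resulting sum to match $A_{r,c}^{q,M}$ up to boundary errors of size $O(q^2)$, which after normalization give the desired $O(q/M)$. Throughout, write $q:=q_{k'}^{(\ell)}$, $M:=M_{k'}^{(\ell)}$, $x=(y,z)$, and $S_n:=\sum_{i=1}^n y_i$. By Lemma~\ref{Lemma metric properties}(1) one has $T^n x=(\sigma^n y,\sigma^{S_n}z)$, so $g(T^n x)=z_{S_n}$.

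Next I would determine $S_n$ explicitly for $n\in[1,qM]$. Since $y\in P^{(\ell)}$, Lemma~\ref{Lemma long tent} gives $S_{q-1}=0$. For $n\in[q,q_{k'+1}^{(\ell)})$ the hypothesis forces $y(n)=s_{k'}^{(\ell)}(n-r)$, and by construction $s_{k'}^{(\ell)}$ is supported on the multiples $jq$ of $q$ in $[q,q_{k'+1}^{(\ell)}]$, taking the value $+1$ on the first $\lfloor q_{k'+1}^{(\ell)}/(3q)\rfloor$ such positions. The crucial point is that \eqref{Eq propeties of M} gives $M\le q_{k'+1}^{(\ell)}/(3q)$, so for $n\in[q,qM]$ every $+1$ jump of $y$ is encountered and no $-1$ jump is: one concludes
$$S_n=j\quad\text{whenever } jq+r\le n\le (j+1)q+r-1,\ j=0,1,\ldots$$
In particular $S_n<q$ iff $n<q^2+r+q$, so outside an initial set of $O(q^2)$ indices, $S_n$ lies in the range $[q,q_{k'+1}^{(\ell)})$ where $z_{S_n}=\gamma(qS_n+c)$ by hypothesis.

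Finally I would split the sum at $n\approx q^2$. On the initial piece, $|z_{S_n} a(n)|\le 1$, contributing a total error of $O(q^2)$. On the main piece, grouping $n$ by the value of $S_n=j$ gives
$$\sum_{n=1}^{qM}z_{S_n}\,a(n)\;=\;\sum_{j=q}^{M-1}\gamma(qj+c)\sum_{b=0}^{q-1}a(qj+r+b)\;+\;O(q^2),$$
where the $O(q^2)$ absorbs the initial part together with the at-most-one incomplete block near $n=qM$. Reindexing $b\mapsto b+r$ and extending the outer sum from $j\in[q,M-1]$ to $j\in[1,M]$ (another $O(q^2)$ perturbation) produces exactly $qM\cdot A_{r,c}^{q,M}$. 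Dividing by $qM$ yields the claimed $O(q/M)$ error. The only delicate step is the bookkeeping of these three boundary errors (unspecified $z$-values on $[0,q)$, the trailing partial block, and the mismatch of summation ranges); the inequality $M\le q_{k'+1}^{(\ell)}/(3q)$ from \eqref{Eq propeties of M} is precisely what prevents any $-1$-jump of $s_{k'}^{(\ell)}$ from entering the calculation and makes this bookkeeping routine.
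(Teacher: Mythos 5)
Your proposal is correct and follows essentially the same route as the paper's own proof: compute the partial sums $S_n$ of the base coordinate using Lemma~\ref{Lemma long tent} and the structure of $s_{k'}^{(\ell)}$ (with the bound $M\leq q_{k'+1}^{(\ell)}/(3q)$ being the key ingredient that only $+1$ jumps are seen), identify $g(T^nx)=z_{S_n}=\gamma(qS_n+c)$ on the bulk range, reindex to match $A_{r,c}^{q,M}$, and absorb the $O(q^2)$ boundary terms into the $O(q/M)$ error. The only cosmetic difference is that the paper reindexes $n=qj+b$ at the outset and shows $\sum_{d=1}^{qj+b}(\Pi_1 x)(d)=j$, whereas you compute $S_n$ as a function of $n$ and then group by the constant value $j$; these are the same computation written in a different order.
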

Note that by the construction of $P^{(\ell)} \times \lbrace -1,0,1\rbrace^\mathbb{Z}$ in Section \ref{Section construction}, there exists  an element $x$ as in the statement of the Lemma in that space.
\begin{proof}
In this proof we suppress the $\ell,k'$ in our notation and simply write $q,\, M.$ 
First, for every two integers $j\in [1,\,M]$ and $b\in [r,\, q+r-1]$,
\begin{eqnarray*}
\sum_{d=1} ^{qj+b} \left( \Pi_1  x   \right) (d) &=& \sum_{d=1} ^{q-1} \left( \Pi_1  x  \right) (d)+ \sum_{d=q} ^{qj+b-1} \left( \Pi_1  x  \right) (d)\\
&=& \sum_{d=q} ^{qj+b-1} s_k ^{(\ell)} (d-r)\\
&=& \sum_{d=q-r} ^{qj+b-r-1} s_k ^{(\ell)} (d) = j.\\
\end{eqnarray*}
Note the use of Lemma \ref{Lemma long tent} in the second equality, and the use of the definition of $s_k ^{(\ell)}$ together with the fact that $M\leq \frac{q_{k'+1} ^{(\ell)} }{3 q_{k'} ^{(\ell)}}$ in the last one. Therefore,
\begin{eqnarray*}
\frac{1}{q M  } \sum_{n=1} ^{q M} g(T^n x) a(n) &=& \frac{1}{q M  } \sum_{n=q} ^{q M} g(T^n x) a(n) + O\left( \frac{1}{M  } \right) \\
&=& \frac{1}{q M  } \sum_{j=1} ^{M} \sum_{b=r} ^{q+r -1} g(T^{q\cdot j+b} x) a(q\cdot j+b) + O\left( \frac{1}{ M  } \right) \\
&=& \frac{1}{q M  } \sum_{j=1} ^{M} \sum_{b=r} ^{q+r -1} g\left( \sigma^{qj+b} \Pi_1 x, \, \sigma^{ \sum_{d=1} ^{qj+b} \left( \Pi_1  x  \right) (d)}  \Pi_2 x \right)   a(q\cdot j+b)  \\
&+&  O\left( \frac{1}{ M  } \right)\\
&=& \frac{1}{q M  } \sum_{j=1} ^{M} \sum_{b=r} ^{q+r -1} g\left( \sigma^{qj+b-r} s_{k'} ^{(\ell)}, \, \sigma^{j} \Pi_2 x \right)   a(q\cdot j+b)  \\
&+&  O\left( \frac{1}{ M  } \right)\\
&=& \frac{1}{q M  } \sum_{j=1} ^{M} \sum_{b=r} ^{q +r-1} \gamma(q \cdot j+c) \cdot  a(q\cdot j+b) + O\left( \frac{ q }{ M  } \right) \\
&=& A_{r,c} ^{q ,M} + O\left( \frac{ q }{ M  } \right).\\
\end{eqnarray*}
Indeed: The first equality follows since $g(T^n x)$ and $a_n$ are both bounded sequences, in the third equality we use Lemma \ref{Lemma metric properties} part (1), and in the fourth equality we are using the previous equation array and the definition of $x$. This definition along with the definition of $s_k ^{(\ell)}$ justify the fifth equality. The last equality is simply the definition of $A_{r,c} ^{q ,M}$.
\end{proof}

\begin{Remark}  \label{Remark other possibility}
In the setup of Lemma \ref{Key Lemma}, we may similarly find another $x \in P^{(\ell)} \times \lbrace -1,0,1\rbrace^\mathbb{Z}$  that satisfies the conclusion  of Lemma \ref{Key Lemma}, but for  $-A_{r,c} ^{q_{k'} ^{(\ell)}, M_{k'} ^{(\ell)} }$. Indeed, this follows from the very same proof by picking $x \in P^{(\ell)} \times \lbrace -1,0,1\rbrace^\mathbb{Z}$ to be any element such that for every  $ q_{k'} ^{(\ell)} \leq n<q_{k'+1} ^{(\ell)}$
$$x (n) = \left( s_{k'} ^{(\ell)}(n-r),\, -\gamma( q_{k'} ^{(\ell)} \cdot n+c) \right).$$
\end{Remark}

We will also require the following Lemma:

\begin{Lemma} \label{Lemma intermediate}
For every $j$ large enough there  is either some $c\in [0 , q_{k'} ^{(i')} )$ such that
\begin{equation}\label{Eq max}
 A_{c,c}  ^{q_{k'} ^{(i')} ,M_{k'} ^{(i')} } \geq \frac{\theta}{8q_{k'} ^{(i')}} ,
\end{equation}
or some  $d\in [0 , q_{k'} ^{(i'+2)} )$ with
$$ -A_{d+1,d} ^{q_{k'} ^{(i'+2)}, M_{k'} ^{(i'+2)}} \geq \frac{ \theta }{8q_{k'} ^{(i')}}.$$
\end{Lemma}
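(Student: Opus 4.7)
The plan is to consider the sums
\[ S_1 := \sum_{c=0}^{q-1} A_{c,c}^{q, M} \quad\text{and}\quad S_2 := \sum_{d=0}^{q'-1} A_{d+1, d}^{q', M'}, \]
writing $q = q_{k'}^{(i')}$, $M = M_{k'}^{(i')}$, $q' = q - 1 = q_{k'}^{(i'+2)}$, $M' = M_{k'}^{(i'+2)}$, and to show that $S_1 - S_2 = \theta + o(1)$ as $j \to \infty$. Once this is established, for $j$ large we have $\max(S_1, -S_2) \geq (S_1 - S_2)/2 \geq \theta/4$. If $S_1 \geq \theta/4$ then pigeonhole yields some $c \in [0, q)$ with $A_{c,c}^{q,M} \geq \theta/(4q) \geq \theta/(8q)$; if $-S_2 \geq \theta/4$ instead, then some $d \in [0, q')$ satisfies $-A_{d+1,d}^{q',M'} \geq \theta/(4q') \geq \theta/(8q)$ (using $q' < 2q$). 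Either outcome gives the Lemma.

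To compute $S_1$ and $S_2$, I would apply the bijections $k = qn+c$ (which ranges over $[q, qM+q-1]$) to $S_1$ and $k = q'n+d$ (which ranges over $[q', q'M'+q'-1]$) to $S_2$, combined with the substitution $j = b - c$ (resp.\ $j = b - d$). This gives
\[ S_1 = \frac{1}{qM} \sum_{j=0}^{q-1} \sum_{k=q}^{qM+q-1} \gamma(k) a(k+j), \qquad S_2 = \frac{1}{q'M'} \sum_{j=1}^{q-1} \sum_{k=q'}^{q'M'+q'-1} \gamma(k) a(k+j). \]
The structural heart of the argument is that the shift $j = 0$ appears in $S_1$ but not in $S_2$, which is precisely why the Lemma uses $q' = q - 1$: this ensures that the off-diagonal shifts $j = 1, \ldots, q-1$ match between the two sums. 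The $j = 0$ contribution to $S_1$ simplifies to $\frac{1}{qM} \sum_{k=q}^{qM+q-1} |a(k)|$ since $\gamma(k) a(k) = |a(k)|$; using Eq.~\eqref{Eq propeties of M} one has $qM \in [N_j - q, N_j]$ with $q/N_j \leq 3/q^3 \to 0$, so this main term tends to $\theta$ by the definition of the subsequence $N_j$.

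For each $j \in [1, q-1]$, the difference between the two shifted-correlation contributions is $O(1/M)$: the discrepancy $|qM - q'M'| < q$ makes the normalizations agree up to $1/M$, and the summation ranges $[q, qM+q-1]$ and $[q-1, q'M'+q-2]$ have symmetric difference of at most $q+1$ indices, each contributing a term of absolute value $\leq 1$. Summing over $j \in [1, q-1]$ yields total error $O(q/M)$, which by Eq.~\eqref{Eq propeties of M} ($M > q^3/3$) is $O(1/q^2) \to 0$. Hence $S_1 - S_2 = \theta + o(1)$, completing the proof via the pigeonhole above. The only real obstacle is careful error bookkeeping, but the rapid growth of $q_k$ built into the construction in Section~\ref{Section construction} comfortably dominates every error contribution.
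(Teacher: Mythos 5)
Your proposal is correct and follows essentially the same route as the paper: both arguments expand $\sum_c A_{c,c}^{q,M}$ and $\sum_d A_{d+1,d}^{q-1,M'}$ via the reindexing $k=qn+c$ (resp.\ $k=q'n+d$), observe that the difference telescopes to the $j=0$ diagonal term $\frac{1}{qM}\sum\gamma(m)a(m)=\frac{1}{qM}\sum|a(m)|\to\theta$ plus $O(q/M)=O(1/q^2)$ errors, and conclude by pigeonhole. (Your bookkeeping is if anything slightly cleaner, giving the asymptotic $S_1-S_2=\theta+o(1)$ where the paper settles for the lower bound $\geq\theta/4$, and your index range $d\in[0,q'-1]$ matches the Lemma statement more exactly than the paper's own $\sum_{c=1}^{q-1}$.)
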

\begin{proof}
In this proof we again suppress the $i',k',u$ in our notation, and write instead $q,\, M,$ for $q_{k'} ^{(i')}$ and $M_{k'} ^{(i')}$, respectively (the terms corresponding to $i'+2$ will come up in the proof later). Now, for every $c,r \in [0,  q]$,
$$\sum_{c=0} ^{q-1} A_{c+r,c} ^{q,M} = \frac{1}{qM} \sum_{m=1} ^{qM} \gamma(m)\cdot \left( a(m+r)+...+a(m+r+q-1) \right) + O\left( \frac{1}{M} \right)$$
So,
$$qM \cdot  \sum_{c=0} ^{q-1}  A_{c,c} ^{q,M} = \sum_{m=1} ^{qM} \gamma(m)\cdot \left( a(m)+...+a(m+q-1) \right)+ O\left( q \right)$$
and
$$ (q-1) \left[ \frac{q M}{q-1} \right] \sum_{c=1} ^{q-1} A_{c+1,c} ^{q-1, \left[ \frac{q M}{q-1} \right] } = \sum_{m=1} ^{qM} \gamma(m)\cdot \left( a(m+1)+...+a(m+q-1) \right)+  O\left( q^2 \right)$$

Combining the last two displayed equations, 
$$qM \cdot  \sum_{c=0} ^{q-1}  A_{c,c} ^{q,M}  -  (q-1) \left[ \frac{q M}{q-1} \right] \sum_{c=1} ^{q-1} A_{c+1,c} ^{q-1, \left[ \frac{q M}{q-1} \right] }  = \sum_{m=1} ^{qM} \gamma(m) a(m) +O\left( q^2 \right) \geq \theta/2 \cdot qM+O\left( q^2 \right).$$
It follows that, assuming $q$ is large enough, and via \eqref{Eq propeties of M}
$$\sum_{c=0} ^{q-1}  A_{c,c} ^{q,M} - \sum_{d=1} ^{q-1} A_{d+1,d} ^{q-1, \left[ \frac{qM}{q-1} \right] } \geq \theta/2- O\left( \frac{q}{M} \right) \geq \theta/2- O\left( \frac{1}{q^2} \right) \geq \theta/4.$$
Recalling our definition of $q_{k'} ^{(i'+2)}$ and $M_{k'} ^{(i'+2)}$, this implies the Lemma.
\end{proof}

\subsection{Construction of the point and system as in Theorem \ref{Main Theorem}} \label{Section main proof}
Recalling Lemma \ref{Lemma intermediate}, by perhaps moving to a further subseqeunce, we may assume that the  inequality from Lemma \ref{Lemma intermediate} is always given by the term corresponding to $q_{k'} ^{(i'+2u)}$ where $u=u(j)$ is either $0$ or $1$, and both $i'=i(j)$ and $u$ are assumed to be constant in $j$. Let us denote this constant value $i'+2u \in \lbrace 0,1,2,3\rbrace$ by $\ell$.  Recalling Definition \ref{Definition i'}, and passing to a subsequence if needed, we assume that the map $j\mapsto k(j)=k'$ is injective. 

We now construct a point $x^{(\ell)}\in P^{(\ell)} \times \lbrace -1,0,1\rbrace^\mathbb{Z} \subseteq X_\ell$ as follows: For every $j\in \mathbb{N}$ and $q_{k(j)} ^{(\ell)} \leq n < q_{k(j)+1} ^{(\ell)}$,  $x^{(\ell)} (n)=x(n)$  where $x$ is the element as in Lemma \ref{Key Lemma} (if $u= 0$) or Remark \ref{Remark other possibility} (if $u =1$), corresponding to $j$, $\ell$ as in the paragraph above, and either $r=c$ and $c$ (if $u=0$) or $r=d+1$ and $c=d$ (if $u=1$) yielding the inequality from Lemma \ref{Lemma intermediate}. Note that here we need the map $j\mapsto k(j)$ to be injective so this is well defined (i.e. the intervals $[q_{k(j)} ^{(\ell)}, \, q_{k(j)+1} ^{(\ell)})$ don't overlap). Note that so far we have only specified the digits $n \in \bigcup_{j\in \mathbb{N}} [q_{k(j)} ^{(\ell)}, \, q_{k(j)+1} ^{(\ell)})$,  and (since we have passed to a subsequence) it is possible that this union does  not cover  all of $\mathbb{N}$. So,  for all  digits not covered we make some choice that ensures $x^{(\ell)} \in P^{(\ell)} \times \lbrace -1,0,1\rbrace^\mathbb{Z}$. Note that by Lemma  \ref{Key Lemma}  and the construction of $P^{(\ell)}$, such a choice is readily available.

We now take our space to be 
\begin{equation}\label{EqExample} X:= X_0 \times X_1 \times X_2 \times X_3 \times \lbrace 0,1,2,3\rbrace,\end{equation}
with the self-mapping $\hat{T} \in \mathcal{C}(X)$ being
$$\hat{T}(p^{(0)},\, p^{(1)},\, p^{(2)},\, p^{(3)},\, i) =  (T p^{(0)},\, T p^{(1)},\, Tp^{(2)},\, Tp^{(3)},\, i).$$
The function $f\in \mathcal{C}(X)$ is taken to be 
$$f( (y^{(0)}, z^{(0)}), \, (y^{(1)}, z^{(1)}),\, (y^{(2)}, z^{(2)}),\, (y^{(3)}, z^{(3)}),\, i) = z_0 ^{(i)}.$$
We next choose our point $x$ to be any $x\in X$ such that: Its projection to $X_\ell$ is $x^{(\ell)}$, and its projection to $\lbrace0,1,2,3\rbrace$ is $\ell$.

We now prove part (1) of Theorem \ref{Main Theorem} via the following two claims:

\begin{Claim}
We have $h(X, \hat{T})=0$.
\end{Claim}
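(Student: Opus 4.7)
The plan is to exploit the product structure of $X$ together with the already-established Claim \ref{Zero entropy for each factor}. The key ingredient I would invoke is the classical product inequality for topological entropy,
\[ h(A \times B,\, T_A \times T_B) \;\leq\; h(A, T_A) + h(B, T_B), \]
valid whenever $A \times B$ is equipped with the sup-metric of its two factors and each factor carries its own self-map.

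To verify this inequality in our setting, I would observe that the product of a Bowen ball of depth $n$ and radius $\epsilon$ in $A$ with one in $B$ is precisely a Bowen ball of depth $n$ and radius $\epsilon$ in $(A \times B, T_A \times T_B)$ under the sup-metric. Consequently
\[ N(A \times B, n, \epsilon) \;\leq\; N(A, n, \epsilon) \cdot N(B, n, \epsilon), \]
and taking $\log$, dividing by $n$, and letting $n \to \infty$ and then $\epsilon \to 0$ yields the claimed subadditivity.

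Applying this iteratively to the five factors appearing in \eqref{EqExample}, I obtain
\[ h(X, \hat{T}) \;\leq\; \sum_{i=0}^{3} h(X_i, T) + h(\{0,1,2,3\}, \id). \]
Each of the first four terms vanishes by Claim \ref{Zero entropy for each factor}. For the last term, note that $\{0,1,2,3\}$ is a finite set on which $\hat{T}$ acts as the identity: choosing any $\epsilon$ smaller than the minimal pairwise distance gives $N(\{0,1,2,3\}, n, \epsilon) = 4$ for every $n$, so the entropy is $0$. Combining, $h(X, \hat{T}) = 0$. There is no serious obstacle here — the entire argument is a brief bookkeeping exercise once Claim \ref{Zero entropy for each factor} is in hand, and the only point that requires care is to use a product metric (such as the sup) under which Bowen balls factor.
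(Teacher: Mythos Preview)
Your proof is correct and follows exactly the approach of the paper, which simply appeals to Claim~\ref{Zero entropy for each factor} and then cites ``standard arguments'' for the product inequality you have spelled out in detail. Your elaboration of the Bowen-ball factorization and the treatment of the finite factor are both accurate.
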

\begin{proof}
By Claim \ref{Zero entropy for each factor} each factor in the product space $X$ has zero entropy,  which implies the assertion via standard arguments. 
\end{proof}

\begin{Claim}
For all $j$ large enough,
$$ \frac{1}{N_j} \sum_{n=1} ^{N_j} f(\hat{T}^n x) a(n) \geq \theta\cdot \tau(N_j).$$
In particular,
$$\limsup_{N\rightarrow \infty} \frac{ \frac{1}{N} \sum_{n=1} ^{N} f(\hat{T}^n x) a(n)}{\tau(N)} >0.$$ 
\end{Claim}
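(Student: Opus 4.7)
The plan is to unwind the construction so that the sum reduces to the one bounded from below in Lemma \ref{Lemma intermediate}, and then check that the resulting lower bound beats $\theta\cdot\tau(N_j)$ by a controlled margin coming from property (2) of the sequence $q_k$.

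The first step is the observation that $f(\hat T^n x)=g(T^n x^{(\ell)})$, since the fifth coordinate of $x$ is fixed to $\ell$ and the other factors do not contribute to $f$. Fix $j$ large and set $k'=k(j)$. I then apply Lemma \ref{Key Lemma} (when $u(j)=0$, so $\ell=i'$) or Remark \ref{Remark other possibility} (when $u(j)=1$, so $\ell=i'+2$), with the pair $(r,c)$ chosen to be the pair achieving the maximum in Lemma \ref{Lemma intermediate}. By construction of $x^{(\ell)}$ its digits on $[q_{k'}^{(\ell)},q_{k'+1}^{(\ell)})$ coincide with those of the element supplied by the Lemma/Remark, so
\[
\frac{1}{q_{k'}^{(\ell)}M_{k'}^{(\ell)}}\sum_{n=1}^{q_{k'}^{(\ell)}M_{k'}^{(\ell)}}g(T^n x^{(\ell)})\,a(n)\;\geq\;\frac{\theta}{8\,q_{k'}^{(i')}}\;-\;O\!\left(\frac{q_{k'}^{(\ell)}}{M_{k'}^{(\ell)}}\right).
\]
Since $q_{k'}^{(\ell)}\asymp q_{k'}^{(i')}$ and $M_{k'}^{(\ell)}\asymp M_{k'}^{(i')}$, estimate \eqref{Eq propeties of M} gives $q_{k'}^{(\ell)}/M_{k'}^{(\ell)}=O(1/(q_{k'}^{(i')})^2)$, which is negligible compared to $\theta/(8q_{k'}^{(i')})$ for $j$ large.

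The second step is to transfer this partial-sum estimate from the length $q_{k'}^{(\ell)}M_{k'}^{(\ell)}$ to the length $N_j$. Directly from the definition $M_{k'}^{(i')}=[N_j/q_{k'}^{(i')}]$ and $M_{k'}^{(i'+2)}=[q_{k'}^{(i')}M_{k'}^{(i')}/q_{k'}^{(\ell)}]$, one has $|N_j-q_{k'}^{(\ell)}M_{k'}^{(\ell)}|\leq 2q_{k'}^{(i')}$, while $N_j\geq q_{k'}^{(i')}M_{k'}^{(i')}\gtrsim (q_{k'}^{(i')})^4$. Since $|f|,|a_n|\leq 1$, replacing the upper summation limit costs only $O(q_{k'}^{(i')}/N_j)=O(1/(q_{k'}^{(i')})^3)$ in both the numerator and in the normalizing denominator. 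Thus
\[
\frac{1}{N_j}\sum_{n=1}^{N_j}f(\hat T^n x)\,a(n)\;\geq\;\frac{\theta}{9\,q_{k'}^{(i')}}
\]
for $j$ large enough, where I absorbed all error terms.

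The last step is to compare this to $\theta\cdot\tau(N_j)$, and this is where property (2) of the sequence $q_k$ comes in. Recalling Definition \ref{Definition i'}, if $i'=0$ then $N_j\geq q_{k'}^{(1)}/3=q_{2k'+1}/3$, and if $i'=1$ then $N_j\geq q_{k'+1}^{(0)}/3=q_{2k'+2}/3$. In either case, because $\tau$ is decreasing, property (2) of $q_k$ yields
\[
\tau(N_j)\;\leq\;\tau\!\left(\tfrac{q_{2k'+i'+1}}{3}\right)\;<\;\frac{1}{16\,q_{2k'+i'}}\;=\;\frac{1}{16\,q_{k'}^{(i')}}.
\]
Combining with the previous display gives $\frac{1}{N_j}\sum_{n=1}^{N_j}f(\hat T^n x)\,a(n)\geq \theta/(9q_{k'}^{(i')})>\theta\cdot\tau(N_j)$, which is exactly the claimed inequality. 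The ``In particular'' statement is then immediate since $\tau(N_j)>0$.

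The bulk of the work has already been done in Lemmas \ref{Key Lemma} and \ref{Lemma intermediate}; the main care point here is the bookkeeping that matches the four indices $\ell=i'+2u$ used in the definition of $x^{(\ell)}$ with the size regime of $N_j$ dictated by Definition \ref{Definition i'}, so that property (2) of $q_k$ produces the correct factor $1/(16q_{k'}^{(i')})$ in both cases $i'\in\{0,1\}$.
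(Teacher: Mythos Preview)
Your proposal is correct and follows essentially the same route as the paper: reduce $f(\hat T^n x)$ to $g(T^n x^{(\ell)})$, invoke Lemma~\ref{Key Lemma} (or Remark~\ref{Remark other possibility}) together with Lemma~\ref{Lemma intermediate} to get a lower bound of order $\theta/q_{k'}^{(i')}$, absorb the $O(q/M)$ error via \eqref{Eq propeties of M}, and finish with property~(2) of the sequence $q_k$ through the case analysis in Definition~\ref{Definition i'}. Your bookkeeping of $|N_j-q_{k'}^{(\ell)}M_{k'}^{(\ell)}|\le 2q_{k'}^{(i')}$ and of the two cases $i'\in\{0,1\}$ is slightly more explicit than the paper's, and your constant $\theta/9$ differs harmlessly from the paper's $\theta/16$, but the argument is the same.
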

\begin{proof}
Fix $j$ large, and let us write $N, q, M, x$, suppressing the dependence on $k',\ell,j$ (except in parts of the proof where we wish to emphasize this dependence). Note that
$$q M \in  [N-q, N].$$
Now:
\begin{eqnarray*}
\frac{1}{N} \sum_{n=1} ^{N} f(\hat{T}^n x) a(n) &=& \frac{1}{q M } \sum_{n=1} ^{q M} f(\hat{T}^n x) a(n)+ O( \frac{1}{  M} )\\
&=& \frac{1}{q M } \left( \sum_{n=1} ^{q -1} f(\hat{T}^n x) a(n) + \sum_{n=q} ^{q M } f(\hat{T}^n x) a(n) \right)+ O( \frac{1}{  M} )\\
&=& \frac{1}{q M  } \left( \sum_{n=1} ^{q -1} f(\hat{T}^n x) a(n) + \sum_{n=1} ^{q M } g(T^n x^{(\ell)} ) a(n) - \sum_{n=1} ^{q -1} g(T^n x^{(\ell)} ) a(n) \right)\\
&+&  O( \frac{1}{  M} )\\
&=&  \frac{1}{q_{k'} ^{(\ell)} M_{k'} ^{(\ell)} } \sum_{n=1} ^{q_{k'} ^{(\ell)} M_{k'} ^{(\ell)} } g(T^n x^{(\ell)}) a(n)  + O( \frac{ 1 }{M_{k'} ^{(\ell)}}  )\\
&\geq & \frac{\theta}{8q_{k'} ^{(i')}} + O\left( \frac{ q_{k'} ^{(\ell)} }{M_{k'} ^{(\ell)}}  \right)\\
\end{eqnarray*}
Note that in the third equality we are again using Lemma \ref{Lemma long tent} in a similar fashion to the proof of Lemma \ref{Key Lemma}, which is allowed since $x^{(\ell)}\in P^{(\ell)} \times \lbrace -1,0,1\rbrace^\mathbb{Z}$. For the last inequality we are using Lemmas \ref{Lemma intermediate} and \ref{Key Lemma} along with the definition of $x$.

We conclude that
$$\frac{1}{N_j} \sum_{n=1} ^{N_j} f(\hat{T}^n x) a(n) \geq \frac{\theta}{8q_{k'} ^{(i')}} + O\left( \frac{ q_{k'} ^{(\ell)} }{M_{k'} ^{(\ell)}}  \right).$$
By \eqref{Eq propeties of M}, 
$$O\left( \frac{ q_{k'} ^{(\ell)} }{M_{k'} ^{(\ell)}}  \right) \leq O\left( \left( \frac{1}{q_{k'} ^{(i')}} \right)^2 \right),$$
and so, as long as $j$ is large enough,
$$\frac{1}{N_j} \sum_{n=0} ^{N_j-1} f(\hat{T}^n x) a(n) \geq \frac{\theta}{16q_{k'} ^{(i')}}.$$
Finally, it follows from our choice of  $N_j$ that $N_j$ is larger than  the element of the sequence $q_k /3$ that comes after $q_{k'} ^{(i')} /3$. So, by the choice of the sequence $q_k$,
$$\frac{1}{16q_{k'} ^{(i')}} \geq \tau(N_j).$$
Combining the last two displayed equations implies the Claim.

\end{proof}

\section{Proof of Theorem \ref{Main Theorem} Part (2)}

In this Section we prove Part (2) of Theorem \ref{Main Theorem}. That is, we show that the system $(X, \hat{T})$ given in \eqref{EqExample} satisfies the M\"obius disjointness conjecture \eqref{Eq Sarnak}.  The proof will be an application of Matom\"aki-Radziwi\l{}\l{}'s bound \cite{MR16} on averages of multiplicative functions along short intervals, which has recently become a standard tool to establish M\"obius disjointness for systems with strong periodic behaviour.

Denote a point $x\in X$ as 
$$(x^{(0)},x^{(1)},x^{(2)},x^{(3)},i), \text{ where } x^{(\ell)}=(y^{(\ell)},z^{(\ell)}).$$

For each $p=(y,z)\in \{-1,0,1\}^{\mathbb N}\times \{-1,0,1\}^{\mathbb Z}$ and $M\in \mathbb N$, denote by $[p]_M$ the truncation 
$$[p]_M:=\big((y_1,\cdots,y_M),(z_{-M},\cdots,z_M)\big).$$
 Write $\mathcal C_{M} (X)$ for the space of cylinder functions $f(x)$ that only depends on $([x^{(\ell)}]_M)_{0\leq \ell\leq 3}$ and the fifth coordinate $i \in \lbrace 0,1,2,3 \rbrace$. Then $\bigcup_{M=1}^\infty\mathcal C_{M} (X)$ is dense in $\mathcal C(X)$ with respect to $C^0$ norm. In consequence, it suffices to verify \eqref{Eq Sarnak} for all cylinder functions $f \in \mathcal C_{M}(X)$ for every $M$. 

The main technical Lemma that we need is the following:
\begin{Lemma}\label{LemLongPeriod} For all $0\leq \ell\leq 3$ and $M, H\in\mathbb N$ and $x\in X$, there exists a set $\Lambda^{(\ell)}(M,H,x)\subseteq\mathbb N$ that satisfies:
\begin{enumerate}
\item $\lim_{N\to\infty}\frac 1N\#\big(\{1,\cdots, N\}\cap\Lambda^{(\ell)}(M,H,x)\big)=1$.

\item For all $n\in \Lambda^{(\ell)}(M,H,x)$,
$[T^{n+h}x^{(\ell)}]_M$ is constant for $0\leq h\leq H-1$.
\end{enumerate}
\end{Lemma}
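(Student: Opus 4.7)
The plan is to reduce the constancy of $[T^{n+h}x^{(\ell)}]_M$ across $h\in\{0,\ldots,H-1\}$ to a condition on the base coordinate $y^{(\ell)}$, and then exploit the extreme sparsity of non-zero entries of sequences in $\Pi_1(X_\ell)$ to exhibit the required density-one set.

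Step 1 (reduction). Write $x^{(\ell)}=(y^{(\ell)},z^{(\ell)})$ and use Lemma \ref{Lemma metric properties}(1) to obtain $T^{n+h}x^{(\ell)}=(\sigma^{n+h}y^{(\ell)},\sigma^{S_{n+h}}z^{(\ell)})$, where $S_k:=\sum_{i=1}^k y^{(\ell)}_i$. The truncation $[T^{n+h}x^{(\ell)}]_M$ is then determined by the length-$M$ base word $(y^{(\ell)}_{n+h+1},\ldots,y^{(\ell)}_{n+h+M})$ and the length-$(2M+1)$ fiber word $(z^{(\ell)}_{S_{n+h}-M},\ldots,z^{(\ell)}_{S_{n+h}+M})$. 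I observe that if $y^{(\ell)}_j=0$ for every $j\in[n+1,n+M+H-1]$, then $S_{n+h}=S_n$ for every $0\leq h\leq H-1$ (so the fiber word does not depend on $h$), and each of the corresponding base windows is the all-zero word (so the base word does not depend on $h$ either). It thus suffices to set
$$\Lambda^{(\ell)}(M,H,x):=\{n\in\mathbb{N}:y^{(\ell)}_j=0\text{ for all }j\in[n+1,n+M+H-1]\}$$
and to show that this set has natural density $1$ in $\mathbb{N}$.

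Step 2 (density of non-zero entries). The core estimate is that $B:=\{j\in\mathbb{N}:y^{(\ell)}_j\neq 0\}$ has natural density $0$. For $y^{(\ell)}\in\sigma^m(P^{(\ell)})$ with $m\geq 0$ this follows from a direct counting argument using Section \ref{Section construction}: inside epoch $k$ the non-zero entries of any $p\in P^{(\ell)}$ form an arithmetic progression of common difference $q_k^{(\ell)}$, so the total number of non-zeros of $p$ in $[1,q_{K+1}^{(\ell)}]$ is bounded by a geometric-type sum dominated by $O(q_{K+1}^{(\ell)}/q_K^{(\ell)})$; choosing $K$ with $q_K^{(\ell)}\leq m+N<q_{K+1}^{(\ell)}$ yields $|B\cap[1,N]|=O(N/q_K^{(\ell)})+O(1)$, which is $o(N)$ since $q_K^{(\ell)}\to\infty$ as $N\to\infty$. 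For a general $y^{(\ell)}\in\Pi_1(X_\ell)$, the closure definition guarantees that the prefix $y^{(\ell)}|_1^N$ agrees with $(\sigma^m p)|_1^N$ for some $p\in P^{(\ell)}$ and some $m\geq 0$ depending on $N$, so the same window-by-window count applies and yields $|B\cap[1,N]|/N\to 0$.

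Step 3 (conclusion and main obstacle). The complement $\mathbb{N}\setminus\Lambda^{(\ell)}(M,H,x)$ is contained in $\bigcup_{k=1}^{M+H-1}(B-k)$, a finite union of shifts of the density-zero set $B$, hence itself of density $0$; this proves (1), and (2) is built into the reduction of Step 1. The one non-routine point is making the density estimate of Step 2 genuinely uniform across the closure $X_\ell$ rather than only the orbit $\bigcup_n T^n(P^{(\ell)}\times\{-1,0,1\}^{\mathbb{Z}})$. This will be handled by noting that the condition \emph{``every length-$L$ window contains at most $f(L)$ non-zero entries''} is a closed constraint on sequences in $\{-1,0,1\}^{\mathbb{N}}$, so the uniform bound $f(L)=o(L)$ established on the dense orbit passes verbatim to $\Pi_1(X_\ell)$.
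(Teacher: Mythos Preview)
Your argument is correct and essentially complete; it takes a somewhat different route from the paper. The paper's proof is more hands-on: for each $N$ it picks an approximant $x^{(N,\ell)}=T^{b}\tilde x^{(N,\ell)}$ with $\tilde x^{(N,\ell)}\in P^{(\ell)}\times\{-1,0,1\}^{\mathbb Z}$ and $[x^{(N,\ell)}]_N=[x^{(\ell)}]_N$, and then assembles $\Lambda^{(\ell)}$ epoch-by-epoch as a union (over $k$ and $N$) of sets of the form ``$n+b\in[q_k^{(\ell)},q_{k+1}^{(\ell)}-H-M]$ and $n+b$ avoids the $H+M$ residues $r_k^{(\ell)}-H-M+1,\ldots,r_k^{(\ell)}\pmod{q_k^{(\ell)}}$''. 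You instead define $\Lambda^{(\ell)}$ directly as $\{n:y^{(\ell)}|_{n+1}^{n+H+M-1}\equiv 0\}$ and deduce density $1$ from the density-$0$ statement for $\{j:y^{(\ell)}_j\neq 0\}$, handling the passage to the closure via the clean observation that ``at most $f(L)$ non-zeros in every length-$L$ window'' is a closed condition in $\{-1,0,1\}^{\mathbb N}$. Your version avoids juggling $N$-dependent approximants and gives a more conceptual reason for density $1$; the paper's version gives a more explicit description of $\Lambda^{(\ell)}$.

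One quantitative slip: the bound $|B\cap[1,N]|=O(N/q_K^{(\ell)})+O(1)$ (with $q_K^{(\ell)}\le m+N<q_{K+1}^{(\ell)}$) underestimates the contribution from epochs $<K$ when the window reaches back that far; epoch $K-1$ alone can contribute $\asymp q_K^{(\ell)}/q_{K-1}^{(\ell)}$, which need not be $O(1)$ nor $O(N/q_K^{(\ell)})$. A correct uniform bound, valid for every length-$L$ window of any $p\in P^{(\ell)}$, is
\[
f(L)\;\le\; C\Big(q_{K-1}^{(\ell)}+\frac{q_K^{(\ell)}}{q_{K-1}^{(\ell)}}+\frac{L}{q_K^{(\ell)}}\Big)\qquad(q_K^{(\ell)}\le L<q_{K+1}^{(\ell)}),
\]
obtained by bounding epochs $\le K-2$ trivially by $q_{K-1}^{(\ell)}$, epoch $K-1$ by $q_K^{(\ell)}/q_{K-1}^{(\ell)}$, and epochs $\ge K$ by $L/q_K^{(\ell)}+O(1)$. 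Since $q_{K-1}^{(\ell)}\to\infty$ and $q_K^{(\ell)}\le L$, this is $o(L)$, so your Step~3 closed-condition argument goes through verbatim.
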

 \begin{proof}
Since 
$$x^{(\ell)}\in X_{\ell}= \text{cl} \left(\bigcup_{b\in\mathbb N _0} T^b(P^{(\ell)}\times\{-1,0,1\}^\mathbb Z)\right),$$
for each $\ell$ and all $N\in\mathbb N_0$, there exists $x^{(N,\ell)}\in \bigcup_{b\in\mathbb N_0} T^b(P^{(\ell)}\times\{-1,0,1\}^{\mathbb Z})$ such that 
$$[x^{(N,\ell)}]_N=[x^{(\ell)}]_N \text{ for all } n\leq N.$$
We also choose $b^{(N,\ell)}\in\mathbb N_0$ and $\tilde x^{(N,\ell)}\in P^{(\ell)}\times\{-1,0,1\}^{\mathbb Z}$ such that $x^{(N,\ell)}=T^{b^{(N,\ell)}}\tilde x^{(N,\ell)}$.

Then for $1\leq n\leq N$ and $0\leq h\leq H-1$, $$[T^{n+h}x^{(\ell)}]_M=[T^{n+h}x^{(N+H+M,\ell)}]_M=[T^{n+b^{(N+H+M,\ell)}+h}\tilde x^{(N+H+M,\ell)}]_M.$$
Therefore, by part (1) of Lemma \ref{Lemma metric properties}, $[T^{n+h}x^{(\ell)}]_M$ is constant for $0\leq h\leq H-1$ if 
\begin{equation}\label{EqLongPeriod1}\Pi_1\tilde x^{(N+H+M,\ell)}(n+b^{(N,\ell)}+h')=0, \text{ for all } 0\leq h'\leq H+M-1.\end{equation}
Since $ \tilde x^{(N+H+M,\ell)}\in  P^{(\ell)}\times\{-1,0,1\}^{\mathbb Z}$, for every $k\in \mathbb{N}$ there is some $0\leq r_k^{(\ell)}\leq q_k ^{(\ell)}-1$ such that 
$$\Pi_1\tilde x^{(N+H+M,\ell)}(n')=s_k^{(\ell)}(n'- r_k^{(\ell)})  \text{ for } q_k^{(\ell)}\leq n'<q_{k+1}^{(\ell)}.$$ 
In particular, $\Pi_1\tilde x^{(N+H+M,\ell)}(n')=0$ for all $q_k^{(\ell)}\leq n'<q_{k+1}^{(\ell)}$ with $n'\not\equiv  r_k^{(\ell)} (\text{mod } q_k^{(\ell)})$. 

It follows that for each $k$, \eqref{EqLongPeriod1} holds on the set $$\begin{aligned}\Lambda_{N,k}^{(\ell)}(M,H,x):=&\{1\leq n\leq N: q_k^{(\ell)}\leq n+b^{(N+H+M,\ell)}\leq q_{k+1}^{(\ell)}-H-M;\\
&\quad  n+b^{(N+H+M,\ell)}\not\equiv r_k^{(\ell)}-H-M+1, \cdots, r_k^{(\ell)}-1, r_k^{(\ell)} (\text{mod } q_k^{(\ell)})\}.\end{aligned}$$
Set $\Lambda_N^{(\ell)}(M,H,x)=\bigcup_{k=1}^\infty\Lambda_{N,k} ^{(\ell)}(M,H,x)\subseteq\{1,\cdots, N\}$. Then $[T^{n+h}x^{(\ell)}]_M$ is constant for $0\leq h\leq H-1$ if $n\in\Lambda^{(\ell)}(M,H,x)$.

Finally, 
$$\lim_{N\to\infty}\frac 1N\#\big(\{1,\cdots, N\}\cap\Lambda^{(\ell)} _N (M,H,x)\big)=1$$
because of the following facts: $H$ and $M$ are fixed,  $b^{(N+H+M,\ell)}\geq 0$, $\lim_{k\to\infty}q_k^{(\ell)}=\infty$ and $\lim_{k\to\infty}\frac{q_{k+1}^{(\ell)}}{q_k^{(\ell)}}= \infty$. We conclude the proof by defining 
$$\Lambda^{(\ell)}(M,H,x):=\bigcup_{N=1}^\infty\Lambda_N^{(\ell)}(M,H,x).$$
\end{proof}

\begin{Corollary}\label{CorLongPeriod} For all $ M, H\in\mathbb N$ and $x\in X$, there exists a set $\Lambda(M,H,x)\subseteq\mathbb N$ that satisfies:
\begin{enumerate}
\item $\lim_{N\to\infty}\frac 1N\#\big(\{1,\cdots, N\}\cap\Lambda(M,H,x)\big)=1$.

\item For all $f\in\mathcal C_{M}(X)$ and any given $n\in \Lambda(M,H,x)$,  $ f(\hat{T}^{n+h}x)$ is constant for $0\leq h\leq H-1$.
\end{enumerate}

\end{Corollary}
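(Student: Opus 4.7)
The plan is to derive the corollary directly from Lemma \ref{LemLongPeriod} by taking a common refinement across the four coordinates $\ell \in \{0,1,2,3\}$. Specifically, I would define
$$\Lambda(M,H,x) := \bigcap_{\ell=0}^{3} \Lambda^{(\ell)}(M,H,x),$$
where each $\Lambda^{(\ell)}(M,H,x)$ is the set provided by Lemma \ref{LemLongPeriod}. This is the natural candidate since property (2) of the corollary requires constancy on all four factors simultaneously.

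For property (1), I would verify that a finite intersection of density-one subsets of $\mathbb{N}$ still has density one. This reduces to observing that
$$\{1,\dots,N\} \setminus \Lambda(M,H,x) = \bigcup_{\ell=0}^{3} \big(\{1,\dots,N\} \setminus \Lambda^{(\ell)}(M,H,x)\big),$$
so by subadditivity of counting measure and part (1) of Lemma \ref{LemLongPeriod} applied four times, the complement has density zero.

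For property (2), given $n \in \Lambda(M,H,x)$ and $f \in \mathcal{C}_M(X)$, I would argue as follows. By the definition of $\hat{T}$, the fifth coordinate $i$ is preserved, so it is automatically constant along $\hat{T}^{n+h}x$ as $h$ varies. For each $\ell \in \{0,1,2,3\}$, part (2) of Lemma \ref{LemLongPeriod} guarantees that $[T^{n+h}x^{(\ell)}]_M$ does not depend on $h$ for $0 \leq h \leq H-1$. Since $f \in \mathcal{C}_M(X)$ depends on its argument only through $\bigl([x^{(\ell)}]_M\bigr)_{0\leq \ell\leq 3}$ and $i$, it follows immediately that $f(\hat{T}^{n+h}x)$ is constant on $0\leq h\leq H-1$.

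No real obstacle is anticipated: the corollary is a routine packaging of Lemma \ref{LemLongPeriod} with the trivial observation about the fifth coordinate and the elementary fact that finite intersections preserve density one. The only point worth being careful about is matching the role of $T$ on each factor with $\hat{T}$ on the product, which is immediate from the definition of $\hat{T}$ as the diagonal skew-product acting trivially on the discrete coordinate.
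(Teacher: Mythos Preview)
Your proposal is correct and matches the paper's own proof essentially line for line: the paper also sets $\Lambda(M,H,x):=\bigcap_{\ell=0}^{3}\Lambda^{(\ell)}(M,H,x)$, notes that density one is preserved under this finite intersection, and then invokes the definition of $\mathcal{C}_M(X)$ together with the $\hat{T}$-invariance of the $i$-coordinate to conclude part (2).
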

\begin{proof} Let $\Lambda^{(\ell)}(M,H,x)$ be as in Lemma \ref{LemLongPeriod},  and set 
$$\Lambda(M,H,x):=\bigcap_{0\leq \ell\leq 3}\Lambda^{(\ell)}(M,H,x)\subset\mathbb N.$$
Then clearly we still have
$$\lim_{N\to\infty}\frac 1N\#\big(\{1,\cdots, N\}\cap\Lambda(M,H,x)\big)=1.$$ 
Next, let $f\in\mathcal C_{M} (X)$. Since $f(\hat{T}^{n+h}x)$ only depends on $\big([T^{n+h}x^{(\ell)}]_M\big)_{0\leq \ell\leq 3}$ and the $i$ coordinate (that does not change when we apply $\hat{T}$), given $n\in\Lambda^{(\ell)}(i,M, H, x)$, it is constant for $0\leq h\leq H-1$ by Lemma \ref{LemLongPeriod}.\end{proof}

We are now ready to establish M\"obius disjointness:

\begin{proof}[Proof of Theorem \ref{Main Theorem} Part (2)] As remarked in the beginning of this Section, we may assume $f\in\mathcal C_M(X)$ for some $M$ and $|f|\leq 1$. Let $x\in X$. Then for a fixed $H$, as $N\to\infty$,
\begin{equation*}
\begin{aligned}
\Big|\frac1N\sum_{n=1}^Nf(\hat{T}^nx)\mu(n)\Big|
=&\Big|\frac1N\sum_{n=1}^N\frac 1H\sum_{h=0}^{H-1}f(\hat{T}^{n+h}x)\mu(n)\Big|+O(\frac HN)\\
=&\Big|\frac1N\sum_{\substack{1\leq n\leq N\\n\in\Lambda(M,H,x)}}\frac1H \sum_{h=0}^{H-1}f(\hat{T}^{n+h}x)\mu(n)\Big|+o_H(1)+O(\frac HN)\\
\leq &\frac1N\sum_{\substack{1\leq n\leq N\\n\in\Lambda(M,H,x)}}\Big|\frac1H \sum_{h=0}^{H-1}f(\hat{T}^{n+h}x)\mu(n+h)\Big|+o_H(1)+O(\frac HN)
\end{aligned}
\end{equation*}
Here $o_H(1)$ stands for a quantity that tends to $0$ as $N\to\infty$ for a fixed $H$.

By Corollary \ref{CorLongPeriod}, $f(\hat{T}^{n+h}x)=f(\hat{T}^n x)$ for every $n\in\Lambda(M,H,x)$ and $0\leq h\leq H-1$. So,
\begin{equation*}
\begin{aligned}
\Big|\frac1N\sum_{n=1}^Nf(\hat{T}^nx)\mu(n)\Big|
\leq &  \frac1N\sum_{\substack{1\leq n\leq N\\n\in\Lambda(M,H,x)}}\Big|\frac1H \sum_{h=0}^{H-1}f(\hat{T}^nx)\mu(n+h)\Big|+o_H(1)+O(\frac HN)\\
\leq  & \frac1N\sum_{\substack{1\leq n\leq N\\n\in\Lambda(M,H,x)}}\Big|\frac1H \sum_{h=0}^{H-1}\mu(n+h)\Big|+o_H(1)+O(\frac HN)\\
\leq &  \frac1N\sum_{n=1}^N\Big|\frac1H \sum_{h=0}^{H-1}\mu(n+h)\Big|+o_H(1)+O(\frac HN)\\
= & O\Big((\frac1{\log H})^{0.01}+(\frac{\log H}{\log N})^{0.01}\Big)+o_H(1)+O(\frac HN).
\end{aligned}
\end{equation*}
 The last step is given by \cite[Theorem 1]{MR16}. 

By letting $H\to \infty$ first, and then $N\to\infty$ for each fixed $H$, we see that 
$$\frac1N\sum_{n=1}^Nf(\hat T^nx)\mu(n)=o(1) \text{ as } N\to\infty.$$
\end{proof}

\bibliography{bib}
\bibliographystyle{plain}

\end{document}